\theoremstyle{plain}
\newtheorem{thm}{Theorem}[section]
\newtheorem{prop}[thm]{Proposition}
\newtheorem{cor}[thm]{Corollary}
\newtheorem{lem}[thm]{Lemma}
\theoremstyle{definition}
\newtheorem{df}{Definition}[section]
\theoremstyle{remark}
\newtheorem{rmk}{Remark}[section]
\newtheorem*{ac}{Acknowledgements}
\newcommand{\zz}{\mathbb{Z}}
\newcommand{\rr}{\mathbb{R}}
\DeclareMathOperator{\card}{Card}
\newcommand{\grsp}{\mathscr{M}}
\DeclareMathOperator{\grdis}{\mathcal{GH}}
\DeclareMathOperator{\hdis}{\mathcal{HD}}
\DeclareMathOperator{\tdim}{\dim_{T}}
\newcommand{\deltasecond}{H^{\times}}
\newcommand{\deltasan}{\Lambda}
\DeclareMathOperator{\pmet}{PMet}
\newcommand{\sqsq}[1]{\mathbf{#1}}
\newcommand{\disdis}{R}
\newcommand{\ctree}{\Upsilon}
\newcommand{\cat}{\mathrm{CAT}}
\newcommand{\conset}{\mathscr{C}}
\newcommand{\pathset}{\mathscr{P}}
\newcommand{\geoset}{\mathscr{G}}
\newcommand{\catset}{\mathscr{Z}}
\newcommand{\grsptwo}{\mathscr{PM}}
\newcommand{\catsettwo}{\mathscr{PZ}}
\newcommand{\qqcube}{\mathbf{C}}
\DeclareMathOperator{\ppgrdis}{\widetilde{\mathcal{GH}_{*}}}
\DeclareMathOperator{\pgrdis}{\mathcal{GH}_{*}}
\newcommand{\topemb}{\rho}
\begin{document}
\title[Continua]
{
Continua 
in 
 the Gromov--Hausdorff space
 }

\author[Yoshito Ishiki]
{Yoshito Ishiki}
\address[Yoshito Ishiki]
{\endgraf
Photonics Control Technology Team
\endgraf
RIKEN Center for Advanced Photonics
\endgraf
2-1 Hirasawa, Wako, Saitama 351-0198, Japan}

\email{yoshito.ishiki@riken.jp}

\subjclass[2020]{Primary 53C23, Secondary 51F99}
\keywords{Connectedness, CAT(0) space, Infinite dimension, Gromov--Hausdorff distance}

\maketitle

\begin{abstract}
We first prove that 
for all compact metrizable spaces, 
there exists a topological embedding of the compact metrizable space 
into each of the sets of compact metric spaces which are 
 connected, path-connected, geodesic, or CAT(0),  in the Gromov--Hausdorff space with finite prescribed values. 
As its application, we show that 
the sets prescribed above  are path-connected and
 their 
  non-empty open subsets have infinite topological dimension.
By the same method, we also prove that the set of all proper CAT(0) spaces 
is path-connected and 
its  non-empty open subsets have   infinite topological dimension  with respect to 
the  pointed Gromov--Hausdorff distance. 
\end{abstract}

\section{Introduction}\label{sec:intro}
In \cite{Ishiki2021branching}, 
the author    constructed  
uncountable (cardinality of the continuum)
branching geodesics of 
the Gromov--Hausdorff distance continuously  parameterized by a Hilbert cube,  
passing through or  avoiding 
 sets of all spaces satisfying  some of  
the doubling property, the uniform disconnectedness, 
the uniform perfectness, 
and  passing through 
 sets of 
all infinite-dimensional spaces 
and the set of all metric spaces homeomorphic to the Cantor set. 
This  construction implies 
that the sets described above
contain a Hilbert cube as a 
topological subspace, and hence 
they 
 have infinite topological dimension with respect  to  the Gromov--Hausdorff distance.

In \cite{Ishiki2021fractal}, 
by constructing topological embeddings of 
compact metrizable spaces
into the Gromov--Hausdorff space, 
the author  proved 
that 
the set of all compact metrizable spaces 
possessing 
prescribed  
topological dimension,  
Hausdorff dimension, 
packing dimension, 
upper box dimension, 
and Assouad dimension, 
and the set of all compact  ultrametric spaces
are  path-connected and
 have  infinite topological dimension.

In this paper, 
as
a 
development  
 of the author's papers 
\cite{Ishiki2021branching} and 
 \cite{Ishiki2021fractal}, 
we prove that 
the sets of all compact metric spaces which are   connected, path-connected, 
 geodesic, or CAT(0)  have 
 infinite topological dimension
as subsets of  the Gromov--Hausdorff space,  
and we also prove that the set of  proper CAT(0) spaces 
has   infinite topological dimension  with respect to 
the  pointed Gromov--Hausdorff distance. 
 In this paper, 
 the topological dimension means the 
 covering dimension. 
 Since we only consider separable metric spaces, 
 the topological dimension in this paper coincides with 
 the large and small inductive dimensions. 
For the details of  dimensions of topological spaces, 
we refer the readers to \cite{HW1948}, \cite{P1975}, 
\cite{Nagata1983}, and  \cite{Cdimension}.

Let 
$(X, d)$ 
be a metric space. 
In this paper, for $x, y\in X$, 
a  map
$\gamma:[0, 1]\to X$ 
is said to be  
a \emph{geodesic from $x$ to $y$} if 
$\gamma(0)=x$ and $\gamma(1)=y$, and 
for all 
$s, t\in [0, 1]$ 
we have 
$d(\gamma(s), \gamma(t))=|s-t|\cdot d(x, y)$. 
Note that if there exists a curve from $x$ to $y$ whose length is 
$d(x, y)$, then there exists a geodesic from $x$
to $y$ (see \cite[Chapter 2]{BBI}). 
A metric space is said to be a 
\emph{geodesic space} if 
for all two points, 
there exists a geodesic connecting  them. 
A geodesic space $(X, d)$ is said to be 
a \emph{$\cat(0)$ space} if,  for all geodesic triangles 
$\bigtriangleup$ in $(X, d)$ and for all $x, y\in \bigtriangleup$, 
 we have 
$d(x, y)\le d_{\rr^{2}}(\overline{x}, \overline{y})$, where 
$d_{\rr^{2}}$ is the $2$-dimensional Euclidean metric and $\overline{x}$ and $\overline{y}$ are comparison points of $x, y$ in a comparison triangle 
$\overline{\bigtriangleup}$ in $\rr^{2}$ of $\bigtriangleup$. 
For the details of $\cat(0)$ spaces, 
we refer the readers to \cite{BH1999}. 
For a metric space 
$(Z, h)$,  
and  for subsets 
$A$,  $B$ 
of 
$Z$, 
we define  
the \emph{Hausdorff distance 
$\hdis(A, B; Z, h)$}
of 
$A$ 
and
$B$ 
in 
$(Z, h)$ by the 
infimum of all $r\in (0, \infty)$ such that 
for all  $a\in A$ and $b\in B$ there exist $u\in A$ and 
$v\in B$ with $h(a, v)\le r$ and $h(b, u)\le r$. 
For  metric spaces 
$(X, d)$ 
and 
$(Y, e)$, 
the 
\emph{Gromov--Hausdorff distance} 
$\grdis((X, d),(Y, e))$ between 
$(X, d)$ 
and 
$(Y, e)$ 
is defined as 
the infimum of  all values  
$\hdis(i(X), j(Y); Z, h)$, 
where 
$(Z, h)$ 
is a metric space, 
 and 
$i: X\to Z$ 
and 
$j: Y\to Z$ 
are isometric embeddings. 
We denote by 
 $\grsp$  
 the  set of all isometry classes of
  non-empty compact metric spaces. 
The space 
$(\grsp, \grdis)$ is called   the 
\emph{Gromov--Hausdorff space}.
By abuse of notation, 
we represent an element of $\grsp$ as 
 a pair $(X, d)$ of a set $X$ and a metric $d$ rather than its isometry class. 
We denote by $\conset$, 
$\pathset$, $\geoset$, and $\catset$
the subset of $\grsp$ consisting of  all compact metric spaces which are
 connected, path-connected, 
geodesic, and  $\cat(0)$,  respectively.

Similarly to \cite{Ishiki2021fractal}, 
by constructing 
a 
topological embedding 
of compact metrizable  spaces, 
we prove that 
 $\conset$, 
$\pathset$, $\geoset$, $\catset$, and 
$\catsettwo$
are path-connected and 
everywhere infinite-dimensional.

Our first main result is the next theorem, 
which  is an  analogue of \cite[Theorem 1.3]{Ishiki2021fractal}.
In the proof of \cite[Theorem 1.3]{Ishiki2021fractal}, 
the author used a construction of metrics on 
 the direct sum spaces since that theorem 
 treats
  properties preserved by the direct sum. 
  We can not apply that method 
   using the direct sum 
  to the class of continua 
  since 
  the direct sum of continua is not connected,  especially, it is 
  not a continuum. 
 In contrast to  my previous paper \cite{Ishiki2021fractal}, 
to prove  Theorem \ref{thm:connectedemb}, 
 we use metrics on the product spaces. 
We will prove  Theorem \ref{thm:connectedemb}  in Section \ref{sec:topemb}. 
\begin{thm}\label{thm:connectedemb}
Let $n\in \zz_{\ge 1}$. 
Let $H$ be a compact metrizable space, and 
$\{v_{i}\}_{i=1}^{n+1}$ be $n+1$ different points in 
$H$. 
Let $\mathscr{S}$ be any one of 
$\conset$, 
$\pathset$, $\geoset$, and $\catset$. 
Let $\{(X_{i}, d_{i})\}_{i=1}^{n+1}$ be a 
sequence  in $\mathscr{S}$ satisfying  that 
$\grdis((X_{i}, d_{i}), (X_{j}, d_{j}))>0$ for all  distinct $i, j$. 
Then, 
there exists a topological embedding 
$\Phi: H\to \mathscr{S}$ such that 
$\Phi(v_{i})=(X_{i}, d_{i})$. 
\end{thm}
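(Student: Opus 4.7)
The strategy I would follow is to realise $\Phi(h)$ as an $\ell^{2}$-product of scaled copies of the spaces $(X_{i},d_{i})$ together with an auxiliary ``coordinate'' factor that continuously encodes the position of $h$. The choice of the $\ell^{2}$-product is essential: it preserves each of the four classes $\conset$, $\pathset$, $\geoset$, $\catset$, since for $\cat(0)$ spaces this is a standard theorem (see \cite{BH1999}) and for the other three it is elementary. Scalar multiplication of a metric by $s\in[0,\infty)$ also preserves each class, with the convention that scaling by $s=0$ collapses the space to a point, so that the corresponding factor drops out of the product.

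I would first produce continuous selectors $\alpha_{i}\colon H\to[0,1]$ with $\alpha_{i}(v_{j})=\delta_{ij}$ and a continuous $\beta\colon H\to[0,1]$ vanishing exactly on $\{v_{1},\dots,v_{n+1}\}$; both exist by Urysohn-type constructions because $H$ is metrizable. I would also fix a topological embedding $\iota=(\iota_{k})_{k\ge 1}\colon H\to[0,1]^{\nn}$ (available since $H$ is compact metrizable) and a positive sequence $(c_{k})$ decaying so fast that the tails are harmless, and then set $L_{k}(h)=c_{k}\beta(h)(1+\iota_{k}(h))$. Define the encoding space
\[
\Theta(h)=\prod_{k=1}^{\infty}[0,L_{k}(h)]\subset\ell^{2},
\]
a compact Hilbert brick, which lies in each of the four classes. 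By construction $\Theta(v_{j})$ is a single point, and whenever $\beta(h)>0$ the sequence $(L_{k}(h))$ is strictly decreasing, so the isometry class of $\Theta(h)$ recovers the full side-length sequence and hence $h$. Finally put
\[
\Phi(h)=(X_{1},\alpha_{1}(h)d_{1})\times\cdots\times(X_{n+1},\alpha_{n+1}(h)d_{n+1})\times\Theta(h),
\]
equipped with the $\ell^{2}$-product metric. Then $\Phi(v_{j})\cong(X_{j},d_{j})$, $\Phi$ takes values in $\mathscr{S}$, and continuity of $\Phi$ in $\grdis$ follows from GH-continuity of rescaling and of the $\ell^{2}$-product in each factor, together with a uniform tail estimate using $\sum c_{k}^{2}<\infty$.

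Since $H$ is compact and $(\mathscr{S},\grdis)$ is Hausdorff, a continuous injection is automatically a topological embedding, so the only remaining task is injectivity, which is the main obstacle. My plan is to argue that the encoding factor $\Theta(h)$ is ``metrically detectable'' inside $\Phi(h)$: once $(c_{k})$ is chosen small enough that $\di(\Theta(h))$ is much smaller than the sizes of the surviving $(X_{i},\alpha_{i}(h)d_{i})$-factors, the Hilbert-brick piece can be recognised as the small-scale ``flat'' part of $\Phi(h)$ (for $\cat(0)$ this can be formalised via uniqueness of the product splitting; for the other classes the same diameter separation isolates $\Theta(h)$). From $\Theta(h)$ one reads off the strictly decreasing sequence $(L_{k}(h))$, hence $\iota(h)$, hence $h$. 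Collisions among the $v_{j}$ themselves are ruled out by the hypothesis that the $(X_{j},d_{j})$ are pairwise non-isometric, and collisions between a $v_{j}$ and a generic $h$ are ruled out by a diameter comparison. The delicate step, and the main place care is needed, is making the detection of $\Theta(h)$ inside $\Phi(h)$ uniform in $h$---particularly for $h$ near one of the $v_{j}$, where several factors degenerate simultaneously.
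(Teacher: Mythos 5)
Your overall architecture (scaled $\ell^{2}$-products of the $X_{i}$ to interpolate between the prescribed spaces, plus an auxiliary gadget parametrized by a Hilbert cube to force injectivity) matches the paper's, but the step you yourself flag as delicate is a genuine gap, and it is exactly where the paper does something different. You place the encoding space $\Theta(h)$ as a \emph{product factor} and then need to recover it, up to isometry, from the isometry class of $\Phi(h)$. No isometry invariant is exhibited that does this. ``Uniqueness of the product splitting'' is not available here: de Rham--type decomposition theorems for $\cat(0)$ or geodesic spaces require finite (affine) dimension and, even then, the flat/Euclidean factor is precisely the part of the decomposition that is not rigid --- and your $\Theta(h)$ is an infinite-dimensional flat brick. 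For $\conset$ and $\pathset$ there is no splitting theorem at all, and ``the small-scale flat part'' is not a defined invariant: if some $X_{i}$ is itself an interval or a brick (legitimate members of all four classes), the flat part of $\Phi(h)$ mixes the sides of $\Theta(h)$ with the scaled $X_{i}$-factors, and for general connected $X_{i}$ it is unclear what the recipe even means. A second, smaller gap is the exclusion of collisions $\Phi(h)=(X_{j},d_{j})$ for $h\notin\{v_{1},\dots,v_{n+1}\}$: a ``diameter comparison'' does not rule out that some $X_{j}$ is itself isometric to a product of the form $\prod_{i}(X_{i},\alpha_{i}(h)d_{i})\times\Theta(h)$ (again, take $X_{j}$ to be a suitable brick). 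The paper handles this by running the construction with $m=n+2$ parallel copies and a pigeonhole argument to select a copy avoiding all such collisions; your proposal has no substitute for this.

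For contrast, the paper sidesteps the factor-recovery problem entirely: the encoding gadget is a spider-like metric tree $\ctree$ with edge lengths given by a point of the Hilbert cube, and it is attached to the product $P=\prod_{i}X_{i}$ by gluing at a single point (amalgamation), not as a product factor. The tree is then isolated inside the glued space by a purely topological invariant --- the closure $\mathcal{A}$ of the set of points admitting neighborhood bases of topological arcs --- which is empty on a product of two nondegenerate connected spaces (the paper's Lemma on arcs in products) and is all of $\ctree$ on the tree. Recovering the parameter from the isometry class of the tree is then an elementary computation with the edge lengths. If you want to salvage your product-based encoding, you would need to prove a factor-detection statement valid for arbitrary compact connected (resp.\ path-connected, geodesic, $\cat(0)$) spaces, which appears to be substantially harder than the theorem itself; switching the gadget from a product factor to a wedge summand, as the paper does, is the move that makes injectivity provable.
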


Applying  Theorem \ref{thm:connectedemb} to 
$H=[0, 1]^{\aleph_{0}}$, 
by the fact that all
non-empty
 open subsets of $[0, 1]^{\aleph_{0}}$ have 
infinite topological dimension, 
we obtain:
\begin{cor}\label{cor:con}
The sets 
$\conset$, 
$\pathset$, $\geoset$, and $\catset$
are  path-connected and  all their  non-empty open 
subsets  have infinite topological dimension. 
\end{cor}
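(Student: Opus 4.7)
The plan is to derive Corollary \ref{cor:con} as a direct consequence of Theorem \ref{thm:connectedemb} applied to the Hilbert cube $H = [0,1]^{\aleph_{0}}$, together with the standard fact that every non-empty open subset of $[0,1]^{\aleph_{0}}$ has infinite topological dimension. Throughout, let $\mathscr{S}$ denote any one of $\conset$, $\pathset$, $\geoset$, or $\catset$.

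For path-connectedness, given two points $A, B \in \mathscr{S}$, the case $A = B$ is handled by the constant path. Otherwise $\grdis(A, B) > 0$ because points of $\grsp$ are isometry classes, so I would invoke Theorem \ref{thm:connectedemb} with $n = 1$, $H = [0,1]$, $v_{1} = 0$, $v_{2} = 1$, $(X_{1}, d_{1}) = A$, and $(X_{2}, d_{2}) = B$. The resulting topological embedding $\Phi \colon [0,1] \to \mathscr{S}$ is automatically continuous and joins $A$ to $B$, which shows that $\mathscr{S}$ is path-connected.

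For infinite-dimensionality of a non-empty open set $U \subseteq \mathscr{S}$, I would fix $A \in U$ and select a companion point $B \in \mathscr{S}$ with $\grdis(A, B) > 0$; a positive rescaling of the metric on $A$ does the job, since each of the four classes $\conset$, $\pathset$, $\geoset$, $\catset$ is stable under scaling. Applying Theorem \ref{thm:connectedemb} with $H = [0,1]^{\aleph_{0}}$, $n = 1$, $v_{1} = (0, 0, \ldots)$, $v_{2} = (1, 0, 0, \ldots)$, $(X_{1}, d_{1}) = A$, and $(X_{2}, d_{2}) = B$ yields a topological embedding $\Phi \colon [0,1]^{\aleph_{0}} \to \mathscr{S}$ with $\Phi(v_{1}) = A \in U$. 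Continuity of $\Phi$ then supplies an open neighborhood $V$ of $v_{1}$ in $[0,1]^{\aleph_{0}}$ with $\Phi(V) \subseteq U$. Since $\Phi|_{V}$ is a homeomorphism onto $\Phi(V)$ and $V$ is a non-empty open subset of the Hilbert cube, $\Phi(V)$ is an infinite-dimensional subspace of $U$, and monotonicity of the covering dimension on subspaces of separable metric spaces forces $\dim U = \infty$.

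Essentially no obstacle arises at this stage: both assertions are formal once Theorem \ref{thm:connectedemb} is available, and the only ancillary point—the existence of $B \in \mathscr{S}$ with $\grdis(A, B) > 0$—is resolved by the rescaling observation. All of the genuine difficulty is packaged into Theorem \ref{thm:connectedemb} itself, whose proof, as announced in the introduction, requires a product-metric construction tailored to preserve connectedness, path-connectedness, the geodesic property, and the $\cat(0)$ condition simultaneously.
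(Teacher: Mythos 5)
Your argument is correct and takes essentially the same route as the paper, which obtains the corollary by applying Theorem \ref{thm:connectedemb} to $H=[0,1]^{\aleph_{0}}$ together with the fact that every non-empty open subset of the Hilbert cube is infinite-dimensional. The only microscopic slip is that your rescaling trick for producing $B$ with $\grdis(A,B)>0$ fails when $A$ is the one-point space, but in that case any non-degenerate member of $\mathscr{S}$ (for instance $[0,1]$) serves as $B$, so the argument stands.
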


\begin{rmk}
The sets $\conset$, 
$\geoset$, and $\catset$ are closed and nowhere dense 
in $\grsp$;
however, $\pathset$ is not closed. 
For example, 
the so-called topologist's sine curve 
$\{\, 
(x, y)\in \rr^{2}
\mid 
x\in (0, 1], y=\sin(1/x)
\, 
\}
\cup
\{0\}\times [0, 1]$
 is a connected and 
non-path-connected compact metric space,  which is a limit of  path-connected  spaces 
$X_{n}=\{\, 
(x, y)\in \rr^{2}
\mid 
x\in [2^{-n}, 1], y=\sin(1/x)
\, \}$
with respect to $\grdis$. 
\end{rmk}

Our second result is a non-compact  analogue of  Theorem 
\ref{thm:connectedemb}. We construct topological embeddings 
of compact metrizable spaces into the space of the pointed 
proper metric  space. 
A metric space is said to be \emph{proper} if 
 all its bounded closed subsets are compact. 
A pair of a metric space and its point is called 
a \emph{pointed metric space}. 
We denote by $\grsptwo$
the set of pointed  isometry classes of pointed proper metric spaces. 
We represent elements of $\grsptwo$ in a similar way to $\grsp$. 
Let $(X, d)$ be a metric space. 
For  $x\in X$ and for  $r\in (0,\infty)$, 
we denote by $B_{d}(x, r)$ the closed  ball centered at $x$ with 
 radius $r$. 
For a subset  $A$ of $X$ and $r\in (0, \infty)$, we denote by  $N_{d}(A, r)$ the set of all $x\in X$ such that there exists $a\in A$ with $d(x, a)<r$. 
Let $t\in (0, \infty)$. 
Let $(X, d, a)$ and $(Y, e, b)$ be pointed metric spaces. 
We denote by $X\sqcup Y$ the direct sum of $X$ and $Y$. 
A metric $h$ on $X\sqcup Y$ is said to be 
\emph{ $(t; a, b)$-admissible} 
if 
$h|_{X^{2}}=d$ and $h|_{Y^{2}}=e$ and 
$h(a, b)<t$ and 
$B_{h}(a, t^{-1})\subset N_{h}(Y, t)$ 
and 
$B_{h}(b, t^{-1})\subset N_{h}(X, t)$. 
We define a quantity $\ppgrdis((X, d, a), (Y, e, b))$ by 
the infimum of all $t\in (0, \infty)$ such that 
there exists a $(t; a, b)$-admissible metric $h$ on $X\sqcup Y$. 
We also define $\pgrdis((X, d, a), (Y, e, b))$ by 
\[
\pgrdis((X, d, a), (Y, e, b))=\min\left\{\ppgrdis((X, d, a), (Y, e, b)),\  \frac{1}{2}\right\}.
\] 
The function $\pgrdis$ is a metric on $\grsptwo$
(see \cite[Corollary 3.14]{herron2016gromov}). 
Note that 
the convergence with respect to $\pgrdis$ is 
equivalent to the ordinary pointed  Gromov--Hausdorff convergence (see \cite[Proposition 3.5]{herron2016gromov}). 
The metric $\pgrdis$ is called the \emph{pointed Gromov--Hausdorff distance}.

Our second  main result is the next theorem, which will be 
proven in Section \ref{sec:pointed}. 
We denote by $\catsettwo$ the set of all $\cat(0)$ spaces in $\grsptwo$. 
\begin{thm}\label{thm:cat}
Let $n\in \zz_{\ge 1}$. 
Let $H$ be a compact metrizable space, 
and 
$\{v_{i}\}_{i=1}^{n+1}$ be $n+1$ different points  in 
$H$. 
Let $\{(X_{i}, d_{i}, a_{i})\}_{i=1}^{n+1}$ be a 
sequence  in $\catsettwo$ such that 
$\pgrdis((X_{i}, d_{i}, a_{i}), (X_{j}, d_{j}, a_{j}))>0$ for all  distinct $i, j$. 
Then, 
there exists a topological embedding 
$\Phi: H\to \catsettwo$ such that 
$\Phi(v_{i})=(X_{i}, d_{i}, a_{i})$. 

\end{thm}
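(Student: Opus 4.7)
The plan is to mimic the proof of Theorem~\ref{thm:connectedemb}, adapted to the pointed proper setting. The key change is that the product-type construction used for compact $\cat(0)$ spaces must be replaced by basepoint-preserving one-point gluings: Reshetnyak's gluing theorem ensures that pasting pointed $\cat(0)$ spaces at a common basepoint preserves $\cat(0)$, while properness is preserved provided the glued pieces have suitably controlled size at large scales.

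Using compactness and metrizability of $H$, I would first fix a continuous partition of unity $\lambda_{1},\dots,\lambda_{n+1}:H\to[0,1]$ with $\lambda_{i}(v_{j})=\delta_{ij}$, together with a continuous map $\iota:H\to[0,1]^{\nn}$ that vanishes at each $v_{i}$, chosen so that $h\mapsto(\lambda_{1}(h),\dots,\lambda_{n+1}(h),\iota(h))$ is injective and hence a topological embedding into a Hilbert cube. For each $h\in H$ I would then define $\Phi(h)$ by one-point gluing, at a single basepoint, of the rescaled pointed spaces $(X_{i},\lambda_{i}(h)d_{i},a_{i})$ (which collapse to the basepoint when $\lambda_{i}(h)=0$) with a proper $\rr$-tree encoding $\iota(h)$: attach to a distinguished Euclidean ray, at each integer depth $k$, a Euclidean spike of length $\varepsilon_{k}\iota(h)_{k}$ for some fixed summable positive sequence $(\varepsilon_{k})_{k\in\nn}$. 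Summability keeps the tree, and hence $\Phi(h)$, proper, and at $h=v_{i}$ all scalings and all spikes vanish, so $\Phi(v_{i})\cong(X_{i},d_{i},a_{i})$.

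Continuity of $\Phi$ in $\pgrdis$ reduces to constructing, for nearby $h,h'\in H$, an explicit $(t;a_{h},a_{h'})$-admissible metric on $\Phi(h)\sqcup\Phi(h')$ directly from the definition given in the introduction: one glues each scaled factor to its partner via a short cylindrical metric whose width is controlled by $|\lambda_{i}(h)-\lambda_{i}(h')|$, and similarly for the spikes, with summability of $(\varepsilon_{k})$ absorbing the tail into an arbitrarily small error in the admissibility constant.

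The principal obstacle is injectivity of $\Phi$: one must recover both $(\lambda_{i}(h))_{i}$ and $(\iota(h)_{k})_{k}$ from the pointed isometry class of $\Phi(h)$. I would ensure this by interposing short identifying segments of mutually distinct lengths between each $X_{i}$-factor and the basepoint, so that each $X_{i}$-factor is intrinsically labeled even if several rescaled copies become metrically similar, exploiting the hypothesis $\pgrdis((X_{i},d_{i},a_{i}),(X_{j},d_{j},a_{j}))>0$ to distinguish the $n+1$ factors, and placing the spikes at distinct integer depths along the distinguished ray so that each coordinate $\iota(h)_{k}$ is read off unambiguously from the intrinsic geometry. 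Once injectivity is in hand, continuity of $\Phi$ combined with compactness of $H$ delivers the topological embedding.
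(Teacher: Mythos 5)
There is a genuine gap at the heart of your construction: you collapse the unwanted factors by rescaling, forming $(X_{i},\lambda_{i}(h)d_{i},a_{i})$ and asserting that these ``collapse to the basepoint'' as $\lambda_{i}(h)\to 0$. This is false for unbounded proper spaces, and the spaces in $\catsettwo$ are in general unbounded. For example, if $X_{i}=[0,\infty)$ with basepoint $0$, then $([0,\infty),\lambda d,0)$ is pointed-isometric to $([0,\infty),d,0)$ for every $\lambda>0$, so $\pgrdis$ from the rescaled space to the one-point space is bounded away from $0$ uniformly in $\lambda$; the same happens for any $X_{i}$ containing a ray from $a_{i}$. Consequently your $\Phi$ is discontinuous at each $v_{j}$ with $j\neq i$ whenever some $X_{i}$ is unbounded. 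Conflating ``the pseudo-metric is identically $0$ at $\lambda=0$'' with ``the pointed spaces converge to a point as $\lambda\to 0^{+}$'' is exactly the trap that the non-compact setting introduces. The paper avoids it by truncation rather than rescaling: it forms $P(s)=\prod_{i}B_{d_{i}}(a_{i},\sigma_{i}(s))$ with continuously varying radii $\sigma_{i}(s)\in[0,\infty]$, using that closed balls of a $\cat(0)$ space are convex and hence $\cat(0)$, and proves continuity of $s\mapsto(P(s),E_{s},p)$ by exhibiting explicit rough isometries between products of balls of nearby radii (this is also why the ball method works for $\catsettwo$ but not for the connected/path-connected/geodesic classes, as remarked at the end of the paper).

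Two secondary points. First, your injectivity scheme (labeling each wedge summand by an interposed segment of distinctive length and reading off the spike lengths) is much vaguer than what is actually needed: if some $X_{i}$ itself contains segments or tree-like pieces emanating from $a_{i}$, it is not clear how the labeling segments and spikes are intrinsically distinguished from parts of the $X_{i}$. The paper's device is the isometry invariant $\mathcal{A}$ (the closure of the set of points admitting neighborhood bases of topological arcs), which vanishes on the product $P(s)$ by the arc-in-a-product lemma and recovers exactly the attached tree, whose edge lengths encode an injective map of $H$ into a Hilbert cube. Second, you do not address the possibility that $\Phi(h)$ for $h$ outside $\{v_{1},\dots,v_{n+1}\}$ is accidentally pointed-isometric to some $(X_{i},d_{i},a_{i})$, which would destroy injectivity at the marked points; the paper handles this by running $m=n+2$ parallel copies of the construction and discarding, by a pigeonhole argument, the copies that hit one of the $(X_{i},d_{i},a_{i})$.
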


\begin{cor}\label{cor:con}
The set $\catsettwo$
is   path-connected and   all its   non-empty open 
subsets  have infinite topological dimension. 
\end{cor}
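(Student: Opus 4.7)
The plan is to adapt the product-based construction used for Theorem \ref{thm:connectedemb} to the pointed proper setting. Since finite $\ell^{2}$-products of proper $\cat(0)$ spaces are proper $\cat(0)$, closed balls in $\cat(0)$ spaces are convex (hence again $\cat(0)$), and multiplication by a compact $\cat(0)$ space preserves both properties, the image of $\Phi$ will stay inside $\catsettwo$. I will realise each $\Phi(h)$ as a finite $\ell^{2}$-product of truncated copies of the $X_{i}$ together with a compact $\cat(0)$ piece that records the remaining degrees of freedom of $h$.

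Using normality of the compact metrisable space $H$, I first choose continuous functions $\mu_{i}\colon H \to [0, \infty]$ with $\mu_{i}(v_{i}) = \infty$, $\mu_{i}(v_{j}) = 0$ for $j \neq i$, and $\mu_{i}^{-1}(\infty) = \{v_{i}\}$; and continuous $q_{k}\colon H \to [0, 1]$ ($k \geq 1$) with $q_{k}(v_{j}) = 0$ for all $j, k$, such that $h \mapsto (q_{k}(h))_{k}$ separates the points of $H \setminus \{v_{1}, \ldots, v_{n+1}\}$ and has at least one strictly positive coordinate on this set. For $q = (q_{k}) \in [0, 1]^{\aleph_{0}}$ set
\[
K(q) = \prod_{k=1}^{\infty}\bigl[0,\, 2^{-k} q_{k}\bigr] \subseteq \ell^{2},
\]
a compact convex subset of $\ell^{2}$, hence proper $\cat(0)$, based at $0$ and equal to $\{0\}$ exactly when $q = 0$. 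Define
\[
\Phi(h) = \Bigl(\prod_{i=1}^{n+1}\overline{B}_{d_{i}}(a_{i}, \mu_{i}(h))\Bigr) \times K(q(h))
\]
with the $\ell^{2}$-product metric and base point $(a_{1}, \ldots, a_{n+1}, 0)$, interpreting $\overline{B}_{d_{i}}(a_{i}, \infty) = X_{i}$. Each truncated factor is a closed convex subset of the proper $\cat(0)$ space $X_{i}$, hence itself proper $\cat(0)$, so $\Phi(h) \in \catsettwo$. At $h = v_{j}$ the $i$-th factor for $i \neq j$ collapses to $\{a_{i}\}$, the $j$-th factor is all of $X_{j}$, and $K(q(v_{j})) = \{0\}$, so $\Phi(v_{j})$ is pointedly isometric to $(X_{j}, d_{j}, a_{j})$.

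Continuity of $\Phi$ in $\pgrdis$ reduces, for each $R > 0$, to Gromov--Hausdorff convergence of the closed $R$-ball of $\Phi(h_{n})$ to that of $\Phi(h)$ whenever $h_{n} \to h$. This $R$-ball is contained in the $\ell^{2}$-product of the $R$-balls of the factors, each of which varies continuously: the $R$-ball of the $i$-th factor equals $\overline{B}_{d_{i}}(a_{i}, \min(\mu_{i}(h), R))$, which varies continuously in $\mu_{i}(h) \in [0, \infty]$ by properness of $X_{i}$, and the Hausdorff distance in $\ell^{2}$ between $K(q)$ and $K(q')$ is bounded by $(\sum_{k} 4^{-k}(q_{k}-q'_{k})^{2})^{1/2}$, which tends to zero with $q - q'$. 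Since $H$ is compact and $\Phi$ is continuous, it then suffices to prove injectivity for $\Phi$ to be a topological embedding.

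For $h \neq h'$ one must show that the pointed isometry classes of $\Phi(h)$ and $\Phi(h')$ differ. The hypothesis that the $(X_{i}, d_{i}, a_{i})$ are pairwise distinct in $\pgrdis$ should ensure that the unbounded (or largest) summand(s) of $\Phi(h)$ can be matched with specific $X_{i}$'s up to pointed isometry, after which the explicit cube structure of $K(q)$ recovers $(2^{-k}q_{k})_{k}$, and hence $q$. The main obstacle is that $\cat(0)$ products are not canonically decomposable in general, so extracting the individual truncation radii $\mu_{i}(h)$ and the parameter $q(h)$ from $\Phi(h)$ in an isometry-invariant manner will be the real work: this likely requires either a de~Rham-type splitting argument for $\cat(0)$ spaces or the insertion of additional bespoke isometry tags (such as carefully chosen tangent cones at the base point) into the construction.
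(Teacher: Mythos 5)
Your overall strategy is the right one and matches the paper's: the corollary is deduced by applying the embedding theorem (Theorem \ref{thm:cat}) to $H=[0,1]^{\aleph_0}$, so the real content is the construction of a topological embedding $\Phi\colon H\to\catsettwo$ through prescribed points. Your continuity argument (truncated balls varying continuously by properness, rough isometries on $\ell^{2}$-products) is essentially the paper's Case 1/Case 2 analysis in Proposition \ref{prop:embednmpointed} and is fine. But the proof has a genuine gap exactly where you flag it: injectivity. Attaching the parameter of $H$ as an extra $\ell^{2}$-\emph{factor} $K(q)=\prod_k[0,2^{-k}q_k]$ gives you no isometry-invariant way to read $q$ back off from the pointed isometry class of $\Phi(h)$. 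There is no canonical de~Rham-type splitting for arbitrary proper $\cat(0)$ spaces (the known splitting theorems require hypotheses such as geodesic completeness or finite dimensionality that the $X_i$ need not satisfy), and nothing prevents an $X_i$ from itself containing Euclidean or box factors that absorb $K(q)$. Even in isolation the box does not determine $q$: side lengths of a Hilbert box are at best determined up to permutation, and since $q_k$ ranges over all of $[0,1]$ the weighted lengths $2^{-k}q_k$ have overlapping ranges, so for instance $q=(0,1,0,\dots)$ and $q=(\tfrac12,0,0,\dots)$ produce isometric boxes. You also never rule out accidental coincidences $\Phi(h)=(X_i,d_i,a_i)$ for $h\notin\{v_1,\dots,v_{n+1}\}$, which would break injectivity at the prescribed values.

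The paper's construction is designed precisely to close these holes, and it does so with gluing rather than products. The parameter is encoded in a metric tree $(\ctree,\disdis[\sqsq{a}])$, $\sqsq{a}\in\qqcube=\prod_i[2^{-2i},2^{-2i+1}]$, attached to the product $P(s)$ at the base point. The tree is recovered isometry-invariantly as $\mathcal{A}(Z,D_{s,k})$, the closure of the set of points with neighborhood systems of topological arcs: Lemma \ref{lem:productinterval} shows arcs have empty interior in the product part, so $\mathcal{A}$ picks out exactly $\ctree$. Proposition \ref{prop:qqinj} then recovers $\sqsq{a}$ (even up to a global scaling) because the edge lengths lie in the pairwise disjoint, decreasing intervals $[2^{-2i},2^{-2i+1}]$ --- this is what your $2^{-k}q_k$ weights fail to achieve. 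Finally, the accidental-coincidence problem is handled by running $m=n+2$ parallel copies of the construction and a counting argument selecting a collision-free copy. If you want to salvage your approach, you would have to replace the factor $K(q)$ by an attached tag of this kind (or prove a splitting/uniqueness statement that, as you note, is not available in this generality).
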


\begin{ac}
The author would like to thank Takumi Yokota  
for 
raising  questions,  
for the many stimulating conversations, and 
for the many  helpful comments. 
The author would
also like to thank the referee for helpful comments and suggestions.
\end{ac}

\section{Metric spaces parametrized by a Hilbert space}\label{sec:fam}
A topological  space is said to be a \emph{Hilbert cube} if 
it is homeomorphic to the countable power of 
the closed unit interval $[0, 1]$ of $\rr$.
To prove Theorems \ref{thm:connectedemb} and 
\ref{thm:cat}, 
in this section,  we construct a family of 
metric trees injectively and continuously parametrized by a Hilbert cube.

For a set $X$, 
a map $d: X\times X\to [0, \infty)$ is said to be 
a \emph{pseudo-metric} if 
$d$ satisfies the triangle inequality and satisfies 
that $d(x, x)=0$  and $d(x, y)=d(y, x)$ for all $x, y\in X$.
If 
a pseudo-metric $d$ satisfies that  $d(x, y)=0$ implies $x=y$, then $d$ is a metric. 
We denote by $\pmet(X)$ 
the set of 
all pseudo-metrics
on $X$. 
We define a metric  $\mathcal{D}_{X}$ on 
 $\pmet(X)$ by 
 $\mathcal{D}_{X}(d, e)=
 \sup_{x, y\in X}|d(x, y)-e(x, y)|$. 
 Note that $\mathcal{D}_{X}$ 
can  take the value $\infty$. 
Let $d\in \pmet(X)$. 
We denote by $X_{/d}$ the quotient set 
by the relation $\sim_{d}$ defined by 
$x \sim_{d} y$ if and only if $ d(x, y)=0$.  
We denote by $[x]_{d}$ the equivalence class of $x$ by 
$\sim_{d}$. We define a metric $[d]$ on $X_{/d}$ by 
$[d]([x]_{d}, [y]_{d})=d(x, y)$. 
The metric $[d]$ is well-defined. 
Remark that if $d$ is a metric, then 
$(X_{/d}, [d])$ is isometric to $(X, d)$. 
The following was proven in \cite[Corollary 2.13]{Ishiki2021fractal}. 
\begin{prop}\label{prop:metconti}
Let $T$ be a topological space whose all 
finite subsets are closed. 
Let $X$ be a set. 
If a map $h: T\to \pmet(X)$  is continuous and 
$h(t)$ is a metric for all $t\in T$ except finite points,  
then the map $F: T\to \grsp$ defined by 
$F(t)=(X_{/h(t)}, [h(t)])$ is continuous. 
\end{prop}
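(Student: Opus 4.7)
The plan is to verify continuity of $F$ at each point $t_0 \in T$ by constructing, for $t$ close to $t_0$, an explicit correspondence between the quotient spaces $X_{/h(t_0)}$ and $X_{/h(t)}$ whose distortion is governed by $\mathcal{D}_{X}(h(t_0), h(t))$. Combined with the standard inequality $\grdis(Y_1, Y_2) \le \tfrac{1}{2}\dis(R)$ valid for any correspondence $R$ between two compact metric spaces, the assumed continuity of $h$ with respect to $\mathcal{D}_{X}$ will then immediately yield continuity of $F$.

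The key steps, in order, would be as follows. First, I would use the $T_1$ hypothesis on $T$: let $E \subset T$ be the finite set of points at which $h$ fails to give a genuine metric. Since finite subsets are closed, $U := T \setminus (E \setminus \{t_0\})$ is an open neighborhood of $t_0$, so when testing continuity at $t_0$ we may restrict to $t \in U$; there, either $t = t_0$ or $h(t)$ is a metric on $X$. Second, for each $t \in T$ I would let $\pi_t : X \to X_{/h(t)}$ denote the surjective quotient map and define
\[
R := \bigl\{\, (\pi_{t_0}(x), \pi_t(x)) \mid x \in X \,\bigr\} \subset X_{/h(t_0)} \times X_{/h(t)}.
\]
Surjectivity of $\pi_{t_0}$ and $\pi_t$ makes $R$ a correspondence. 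Third, by the very definition of $[h(t_0)]$ and $[h(t)]$, one computes
\[
\dis(R) = \sup_{x, y \in X} \bigl|\,h(t_0)(x, y) - h(t)(x, y)\,\bigr| \le \mathcal{D}_{X}(h(t_0), h(t)),
\]
so that $\grdis(F(t), F(t_0)) \le \tfrac{1}{2}\mathcal{D}_{X}(h(t), h(t_0))$, which tends to $0$ as $t \to t_0$ by continuity of $h$.

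I do not anticipate a serious obstacle. The essential observation is that the \emph{same-representative} relation $R$ above is simultaneously a correspondence for both quotients regardless of which, if either, of $h(t)$ or $h(t_0)$ collapses points; the supremum defining $\dis(R)$ is taken over $X \times X$ and thereby side-steps any need to identify the two quotient sets canonically. The $T_1$ hypothesis enters only to guarantee that the finitely many non-metric parameters cannot cluster at $t_0$, so that the bound can be phrased cleanly in terms of $\mathcal{D}_{X}$ alone.
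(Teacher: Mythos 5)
Your argument is correct: the same-representative correspondence between $X_{/h(t_0)}$ and $X_{/h(t)}$ has distortion exactly $\mathcal{D}_{X}(h(t_0),h(t))$, so $F$ is $\tfrac{1}{2}$-Lipschitz from $(T,\, \mathcal{D}_X\circ h)$ into the Gromov--Hausdorff space and in particular continuous. The paper does not prove this proposition itself but cites it from an earlier work, where the proof rests on essentially the same Lipschitz estimate between quotients of pseudometrics; note that your bound never actually uses the $T_1$ hypothesis or the finiteness of the exceptional set, so that preliminary reduction to the neighborhood $U$ is harmless but superfluous.
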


A family of metric trees defined  in this section is 
a connected  analogue of a family of 
 compact  metric spaces homeomorphic to the one-point compactification of the countable discrete space,  
 which is constructed in 
\cite[Definition 4.1]{Ishiki2021branching} and \cite[Definition 4.1]{Ishiki2021fractal}. 
\begin{df}
We define 
$\qqcube=\prod_{i=1}^{\infty}[2^{-2i}, 2^{-2i+1}]$. 
Note that every $\sqsq{a}=\{a_{i}\}_{i\in \zz_{\ge 1}}\in \qqcube$ satisfies $a_{i}<1$ and $a_{i+1}<a_{i}$ for all $i\in \zz_{\ge 1}$  and $\lim_{i\to \infty}a_{i}=0$. 
We define a metric $\tau$ on $\qqcube$ by 
$\tau(x, y)=\sup_{i\in \zz_{\ge 1}}|x_{i}-y_{i}|$. 
Then,  $\tau$ generates the topology which makes 
$\qqcube$  a Hilbert cube. 
\end{df}

\begin{df}\label{df:tree}
Let $\sqsq{a}=\{a_{i}\}_{i\in \zz_{\ge 1}}\in \qqcube$. 
We supplementally put $a_{0}=1$. 
Put 
$\ctree=\{(0, 0)\}\cup (0, 1]\times \zz_{\ge 0}$. 
To simplify our description, 
we represent an element $(s, i)$ of $\ctree$ as 
$s_{i}$. For example, $0_{0}=(0, 0)$, 
$1_{n}=(1, n)$, 
and $(1/2)_3=(1/2, 3)$. 
We define a metric $\disdis[\sqsq{a}]$ on
$\ctree$ by 
\[
\disdis[\sqsq{a}](s_{i}, t_{j})
=
\begin{cases}
a_{i}|s-t| & \text{ if $i=j$;}\\
a_{i}s+a_{j}t & \text{ otherwise.}
\end{cases}
\]
Note that $(\ctree, \disdis[\sqsq{a}])$ is compact and 
the space 
$(\ctree, \disdis[\sqsq{a}])$ can be considered as a 
metric subtree of the spider tree which is  the plane equipped with 
the radial metric (see \cite[Examples 1.6 and 1.8]{AB2010}). 
The metric  $\disdis[\sqsq{a}]$ is  constructed in a similar way to \cite[Examples 1.6 and 1.8]{AB2010} with dilated  edges,  and each $a_{i}$ is the scaling factor  of the  $i$-th edge. 
Since all metric trees are $\cat(0)$ (see \cite[(5) in Example 1.15, p.167]{BH1999}), 
the space $(\ctree, \disdis[\sqsq{a}])$ is 
a  $\cat(0)$ space. 
Note that 
even if $\sqsq{a}\neq \sqsq{b}$, 
the metrics 
$\disdis[\sqsq{a}]$ and $\disdis[\sqsq{b}]$ generate 
the same topology on $\ctree$. 
\end{df}

\begin{prop}\label{prop:qqinj}
Let $\sqsq{a}=\{a_{i}\}_{i\in \zz_{\ge 1}}$ and 
$\sqsq{b}=\{b_{i}\}_{i\in \zz_{\ge 1}}$ be in $\qqcube$. 
Let $K, L\in (0, \infty)$. 
If $(\ctree, K\cdot\disdis[\sqsq{a}])$ and 
$(\ctree, L\cdot \disdis[\sqsq{b}])$
are isometric to each other, 
then $\sqsq{a}=\sqsq{b}$. 
\end{prop}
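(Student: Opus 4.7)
The plan is to show that any isometry $f \colon (\ctree, K \cdot \disdis[\sqsq{a}]) \to (\ctree, L \cdot \disdis[\sqsq{b}])$ must fix the central point $0_{0}$ and permute the rays, and then to use the disjointness of the intervals $[2^{-2i}, 2^{-2i+1}]$ to conclude $\sqsq{a} = \sqsq{b}$.

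First, I would observe that $0_{0}$ admits a purely topological characterisation inside $\ctree$: it is the unique point whose removal splits $\ctree$ into infinitely many connected components. Indeed, $\ctree \setminus \{0_{0}\}$ is the disjoint union of the rays $R_{i} := (0, 1] \times \{i\}$ for $i \in \zz_{\ge 0}$; removing a leaf $1_{i}$ leaves $\ctree$ connected; and removing any interior point $s_{i}$ with $s \in (0, 1)$ yields exactly two components. Since any isometry is a homeomorphism, we obtain $f(0_{0}) = 0_{0}$, and $f$ induces a bijection $\sigma \colon \zz_{\ge 0} \to \zz_{\ge 0}$ with $f(\overline{R_{i}}) = \overline{R_{\sigma(i)}}$. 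The closure $\overline{R_{i}} = R_{i} \cup \{0_{0}\}$ equipped with $K \cdot \disdis[\sqsq{a}]$ is isometric to the Euclidean interval $[0, K a_{i}]$ via the map $s_{i} \mapsto K a_{i} s$ (with the convention $a_{0} = 1$); comparing diameters across $f$ therefore yields $K a_{i} = L b_{\sigma(i)}$ for every $i \ge 0$.

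Finally, since $a_{i}, b_{i} \le 1/2$ for $i \ge 1$ while $a_{0} = b_{0} = 1$, the multiset $\{K a_{i}\}_{i \ge 0}$ attains its maximum uniquely at $i = 0$, forcing $\sigma(0) = 0$ and hence $K = L$. For $i \ge 1$, the relation $a_{i} = b_{\sigma(i)}$ combined with the pairwise disjointness of the intervals $[2^{-2i}, 2^{-2i+1}]$ (which holds because $2^{-2i+1} < 2^{-2(i-1)}$) forces $\sigma(i) = i$, and so $a_{i} = b_{i}$ for all $i$. The only step requiring genuine care is the topological identification of $0_{0}$; once that is established, the rest is a straightforward bookkeeping on diameters and the geometric decay rates of the two sequences.
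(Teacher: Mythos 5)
Your proof is correct, and it diverges from the paper's at one genuine point. Both arguments begin identically: $0_{0}$ is the unique point whose removal disconnects $\ctree$ into infinitely many components, so the isometry $f$ fixes it and permutes the branches. From there the paper pins down each leaf individually by an inductive extremal argument: with $A_{n}=\{1_{i}\mid i\ge n\}$, the leaf $1_{n}$ is the \emph{unique} farthest point from $0_{0}$ in $A_{n}$ (because $a_{n}>a_{n+1}>\cdots$), so $f(1_{n})=1_{n}$ for every $n$ and one reads off $Ka_{n}=Lb_{n}$ directly, with no permutation ever appearing. You instead let $f$ induce an arbitrary bijection $\sigma$ of the rays, compare diameters of the closed rays to get $Ka_{i}=Lb_{\sigma(i)}$, and only then force $\sigma=\mathrm{id}$: first $\sigma(0)=0$ and $K=L$ from the unique maximum at $i=0$ (since $a_{0}=b_{0}=1$ while all other entries are at most $1/2$), and then $\sigma(i)=i$ for $i\ge 1$ from the pairwise disjointness of the intervals $[2^{-2i},2^{-2i+1}]$ defining $\qqcube$. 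The trade-off is that the paper's mechanism uses only the strict monotonicity of the sequences (so it would apply to any strictly decreasing parameter sequences normalized by $a_{0}=b_{0}=1$), whereas yours leans on the specific product structure of $\qqcube$; in exchange, your version cleanly separates the metric input (the diameter bookkeeping) from the combinatorial input (killing $\sigma$), and avoids the induction. All the individual steps you flag check out: the rays are indeed the components of $\ctree\setminus\{0_{0}\}$, $\overline{R_{i}}$ is isometric to $[0,Ka_{i}]$ since $\disdis[\sqsq{a}](s_{i},0_{0})=a_{i}s$, and the intervals $[2^{-2i},2^{-2i+1}]$ are disjoint because $2^{-2i+1}<2^{-2(i-1)}$.
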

\begin{proof}
Let $f: (\ctree, K\cdot\disdis[\sqsq{a}])\to 
(\ctree,  L\cdot \disdis[\sqsq{b}])$ be an isometry. 
The point 
$0_{0}$ of $\ctree$ is the unique point such that 
$\ctree \setminus\{0_{0}\}$ has infinitely many 
connected components. 
Therefore $f(0_{0})=0_{0}$. 
Let $A$ be the set of all points $p$ in $\ctree$ such that 
$\ctree\setminus \{p\}$ is connected. 
Then  $A=\{1_{i}\mid {i\in \zz_{\ge 0}}\}$, 
and $f(A)=A$. 
We put $A_{0}=A$ and 
$A_{n}=A\setminus \{1_{i}\mid i= 0, 1\dots, n-1\}$
for each $n\in \zz_{\ge 1}$. 
Then,  for each $n \in \zz_{\ge 0}$, 
the point $1_{n}\in \ctree$ is characterized 
as 
the unique argument of the  maximum  of the map 
$p\in A_{n}\mapsto \disdis[\sqsq{a}](p, 0_{0})$, i.e., 
the point
$1_{n}$ is 
the unique point 
satisfying that 
$\disdis[\sqsq{a}](1_{n}, 0_{0})=\max_{p\in A_{n}}\disdis[\sqsq{a}](p, 0_{0})$. 
Since $f$ is an isometry,  
by induction, 
 $f(1_{n})=1_{n}$ for all $n\in \zz_{\ge 0}$. Thus, 
 for each $n\in \zz_{\ge 0}$,  we have
\[
K\cdot a_{n}=K\cdot \disdis[\sqsq{a}](1_{n}, 0_{0}) 
=L\cdot \disdis[\sqsq{b}](1_{n}, 0_{0})=L\cdot b_{n}, 
\]
and hence $Ka_{n}=Lb_{n}$ for all $n\in \zz_{\ge 0}$. 
By $a_{0}=b_{0}=1$, 
we have $K=L$. 
 Therefore, we conclude that
$\sqsq{a}=\sqsq{b}$. 
\end{proof}
\begin{prop}\label{prop:qqconti}
For all  $\sqsq{a}, \sqsq{b}\in \qqcube$, 
we have 
$\mathcal{D}_{\ctree}(
\disdis[\sqsq{a}], \disdis[\sqsq{b}])\le 2\tau(\sqsq{a}, \sqsq{b})$. 
\end{prop}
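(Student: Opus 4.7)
The plan is a direct case analysis based on the piecewise definition of $\disdis[\sqsq{a}]$. Fix arbitrary points $s_i, t_j \in \ctree$; the task is to bound $|\disdis[\sqsq{a}](s_i, t_j) - \disdis[\sqsq{b}](s_i, t_j)|$ uniformly by $2\tau(\sqsq{a}, \sqsq{b})$, and then take the supremum over $(s_i, t_j)$ to obtain the stated estimate on $\mathcal{D}_{\ctree}$.

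First I would handle the case $i = j$. Here both metrics reduce to $a_i|s-t|$ and $b_i|s-t|$, so the difference equals $|a_i - b_i| \cdot |s-t|$. Since $s, t \in [0,1]$ and $0 \le a_i, b_i < 1$ (with the supplementary convention $a_0 = b_0 = 1$ contributing $0$ to this case), we have $|s-t| \le 1$, and hence the difference is bounded by $|a_i - b_i|$, which is at most $\tau(\sqsq{a}, \sqsq{b}) = \sup_{k} |a_k - b_k|$.

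Next I would treat the case $i \ne j$. Then
\[
\disdis[\sqsq{a}](s_i, t_j) - \disdis[\sqsq{b}](s_i, t_j) = (a_i - b_i)s + (a_j - b_j)t,
\]
so the triangle inequality and $s, t \le 1$ give the bound $|a_i - b_i| + |a_j - b_j| \le 2\tau(\sqsq{a}, \sqsq{b})$. Combining both cases and taking the supremum over $s_i, t_j \in \ctree$ yields $\mathcal{D}_{\ctree}(\disdis[\sqsq{a}], \disdis[\sqsq{b}]) \le 2\tau(\sqsq{a}, \sqsq{b})$, which is the desired inequality.

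There is no real obstacle here: the estimate is a short piecewise computation, and the only mild subtlety is remembering the convention $a_0 = b_0 = 1$, which is harmless because it makes the contribution of the index $0$ vanish from $|a_0 - b_0|$ while remaining consistent with the formula.
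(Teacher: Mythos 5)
Your proof is correct and follows essentially the same route as the paper: a direct case split on $i=j$ versus $i\neq j$, using $|s-t|\le 1$ and $s,t\le 1$ to get the bounds $\tau(\sqsq{a},\sqsq{b})$ and $2\tau(\sqsq{a},\sqsq{b})$ respectively. Your remark about the convention $a_{0}=b_{0}=1$ making the index-$0$ contribution vanish is a correct and harmless addition.
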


\begin{proof}
Let $s_{i}, t_{j}\in \ctree$. 
If $i=j$, 
then we have 
\[
|\disdis[\sqsq{a}](s_{i}, t_{j})-\disdis[\sqsq{b}](s_{i}, t_{j})|
=|a_{i}-b_{i}||s-t|\le |a_{i}-b_{i}|\le \tau(\sqsq{a}, \sqsq{b}). 
\]
If $i\neq j$, 
then we have 
\[
|\disdis[\sqsq{a}](s_{i}, t_{j})-\disdis[\sqsq{b}](s_{i}, t_{j})|
\le |a_{i}-b_{i}|s+|a_{j}-b_{j}|t\le  2\tau(\sqsq{a}, \sqsq{b}). 
\]
This finishes the proof. 
\end{proof}

\section{Topological embeddings}\label{sec:topemb}
In this section,  we prove Theorem \ref{thm:connectedemb}. 
Before doing that, we prepare and review  the basic constructions and properties  of 
metrics. 

\subsection{Amalgamation and product  of  metrics}

\begin{prop}\label{prop:amal1}
Let $X$ and $Y$ be sets. 
Let 
$d\in \pmet(X)$ 
and 
$e\in \pmet(Y )$. 
Assume that there exists a point $p$ such that  
$X\cap Y=\{p\}$. 
We define 
a symmetric function 
$h: (X\cup Y)^2\to [0, \infty)$ 
by
\begin{align*}
	h(x, y)=
		\begin{cases}
		d(x, y) & \text{if $x, y\in X$;}\\
		e(x, y) & \text{if $x, y\in Y$;}\\
		d(x, p)+e(p, y) & \text{if $(x, y)\in X\times Y$. }
		\end{cases}
\end{align*}
Then, the following statements hold  true. 
\begin{enumerate}
\item 
The function $h$ is a pseudo-metric and satisfies  $h|_{X^2}=d$ and 
$h|_{Y^{2}}=e$. \label{item:metric1}
\item 
If $d$ and $e$ are metrics, then  so is $h$. \label{item:metric2}
\item 
If $(X, d)$ and $(Y, e)$ are 
geodesic (resp.~$\cat(0)$) metric spaces, 
then so is $(X\cup Y, h)$. \label{item:metric3}
\end{enumerate}
\end{prop}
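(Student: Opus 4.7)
The plan is to address the three items in turn. Items (1) and (2) are routine verifications; the substance lies in item (3), where I would construct geodesics by concatenation and invoke Reshetnyak gluing for the $\cat(0)$ assertion.

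For (1), symmetry and the vanishing of $h$ on the diagonal are immediate from the corresponding properties of $d$ and $e$; the identity $h|_{X^{2}}=d$ holds by definition, and the three clauses in the definition of $h$ agree at the shared point $p$ because $d(p,p)=0=e(p,p)$. For the triangle inequality $h(x,z)\le h(x,y)+h(y,z)$, I would enumerate cases according to which of $X\setminus\{p\}$ and $Y\setminus\{p\}$ the three points lie in. If all three lie in $X$, or all three in $Y$, the claim follows from the corresponding inequality for $d$ or $e$. All remaining cases reduce to the representative situation $x,y\in X$, $z\in Y$, for which $h(x,y)+h(y,z)=d(x,y)+d(y,p)+e(p,z)\ge d(x,p)+e(p,z)=h(x,z)$ by the triangle inequality for $d$; the other mixed configurations are verified in the same way. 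For (2), the only new case is $x\in X\setminus\{p\}$, $y\in Y\setminus\{p\}$, where $h(x,y)=d(x,p)+e(p,y)>0$ because $d$ separates $p$ from $x$ and $e$ separates $p$ from $y$.

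For the geodesic part of (3), given $x\in X$ and $y\in Y$, I would concatenate a geodesic $\gamma_{1}$ from $x$ to $p$ in $(X,d)$ with a geodesic $\gamma_{2}$ from $p$ to $y$ in $(Y,e)$, reparametrising the join over $[0,1]$ so that it has constant speed $d(x,p)+e(p,y)=h(x,y)$; this is a geodesic in $(X\cup Y, h)$ because, by the computation in the preceding paragraph, on every subinterval the length equals the $h$-distance between the endpoints. For the $\cat(0)$ assertion, I would appeal to the Reshetnyak gluing theorem in \cite{BH1999}: $(X\cup Y, h)$ is the gluing of the $\cat(0)$ spaces $(X,d)$ and $(Y,e)$ along the (trivially) isometric convex subspaces $\{p\}\subset X$ and $\{p\}\subset Y$, so the glued space is again $\cat(0)$.

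The main obstacle is not analytic but bookkeeping: the point $p$ belongs to both $X$ and $Y$, so one must verify that the three clauses in the definition of $h$ are consistent at $p$ and that the case analysis for the triangle inequality handles $p$ correctly under any labelling of the points. A secondary stylistic choice is whether to invoke Reshetnyak gluing as a black box or to verify the $\cat(0)$ four-point condition directly on triangles straddling $p$ by folding them into comparison configurations in $\rr^{2}$; I would opt for the citation to \cite{BH1999} for brevity.
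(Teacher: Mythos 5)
Your proposal is correct and follows essentially the same route as the paper: the geodesic part of (3) by concatenating a geodesic from $x$ to $p$ with one from $p$ to $y$, and the $\cat(0)$ part by the gluing theorem of \cite[Theorem 11.1, p.347]{BH1999} applied along the singleton $\{p\}$. The only difference is that you verify (1) and (2) by a direct case analysis, whereas the paper outsources these routine checks to a citation; your case analysis is complete and correct.
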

\begin{proof}

The statements (\ref{item:metric1}) and (\ref{item:metric2}) in   the 
proposition  are deduced from \cite[Proposition 3.2]{Ishiki2020int}.
The  statement on $\cat(0)$ spaces  in   (\ref{item:metric3}) follows from  
\cite[Theorem 11.1, 
p.347]{BH1999}.  
It suffices to show that, 
if $(X, d)$ and $(Y, e)$ is geodesic spaces, 
for all  
$x\in X\setminus \{p\}$ and $y\in Y\setminus \{p\}$ 
there exists a geodesic between $x$ and $y$. 
By the assumption, there exists a geodesic from  
$x$ to  $p$ and  a geodesic from $p$ to  $y$. 
By the definition of $h$, 
we can apply 
\cite[Proposition 2.6]{memoli2021characterization}
to these two geodesics, and 
we can glue these two geodesics together at the point $p$. 
Then, 
we obtain a 
geodesic between $x$ and $y$ (see also \cite[Lemma 5.24, p.67]{BH1999}). 
\end{proof}

For two metric spaces $
(X, d)$ 
and 
$(Y, e)$, 
we denote by 
$d\times e$  
the 
$\ell^{2}$-product metric  
defined by 
$(d\times e)((x, y), (u, v))=
\sqrt{d(x, u)^{2} + e(y, v)^{2}}$. 

The next lemma can be found in 
\cite[(3) in Example 1.15, p.167]{BH1999}. 
\begin{lem}\label{lem:prodcat}
Let $(X, d)$ and $(Y, e)$ be metric spaces. 
If $(X, d)$ and $(Y, e)$ are geodesic spaces 
(resp.~$\cat(0)$ spaces), then 
so is $(X\times Y, d\times e)$. 
\end{lem}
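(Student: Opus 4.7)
The plan is to split the assertion into its two parts and exploit the fact that the squared $\ell^{2}$-metric $(d\times e)^{2}$ is additive across the two factors.

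For the geodesic part, pick any two points $(x_{1}, y_{1}), (x_{2}, y_{2}) \in X \times Y$, choose geodesics $\alpha : [0, 1] \to X$ from $x_{1}$ to $x_{2}$ and $\beta : [0, 1] \to Y$ from $y_{1}$ to $y_{2}$, and set $\gamma(t) = (\alpha(t), \beta(t))$. A direct computation yields
\[
(d \times e)(\gamma(s), \gamma(t))^{2}
= d(\alpha(s), \alpha(t))^{2} + e(\beta(s), \beta(t))^{2}
= (s-t)^{2}\bigl(d(x_{1}, x_{2})^{2} + e(y_{1}, y_{2})^{2}\bigr),
\]
so $\gamma$ is a constant-speed geodesic from $(x_{1}, y_{1})$ to $(x_{2}, y_{2})$, which proves the geodesic case.

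For the $\cat(0)$ part, I would use the Bruhat--Tits (CN) characterization: a geodesic space is $\cat(0)$ if and only if for every triple $p, q, r$ there is a midpoint $m$ of $q, r$ satisfying
\[
d(p, m)^{2} \le \tfrac{1}{2} d(p, q)^{2} + \tfrac{1}{2} d(p, r)^{2} - \tfrac{1}{4} d(q, r)^{2}.
\]
Given $P=(p_{1}, p_{2})$, $Q=(q_{1}, q_{2})$, $R=(r_{1}, r_{2})$ in $X \times Y$, I pick CN-midpoints $m_{i}$ of $q_{i}, r_{i}$ in each factor, and set $M = (m_{1}, m_{2})$. Evaluating the coordinate-wise geodesic from the first part at $t = 1/2$ shows that $M$ is a midpoint of $Q, R$ with respect to $d \times e$. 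Applying CN in each factor to $(p_{i}, q_{i}, r_{i}, m_{i})$ and summing the two resulting squared-distance inequalities yields exactly the CN inequality for $(P, Q, R, M)$, because $(d \times e)^{2}$ splits as the sum of squares over the two factors.

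The main obstacle is the midpoint identification step: one must know that some midpoint realizing the CN inequality in the product can be produced coordinatewise from the factor midpoints. This is precisely what the first-part construction delivers, so once it is in hand the $\cat(0)$ conclusion reduces to pure algebra in the $\ell^{2}$-sum. An alternative route via Reshetnyak subembeddability or a direct comparison-triangle argument is possible, but the CN route is cleanest because it only requires adding a scalar inequality per factor and no four-dimensional comparison geometry.
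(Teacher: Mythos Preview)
Your argument is correct. The paper does not actually prove this lemma: it simply records that the statement ``can be found in \cite[(3) in Example 1.15, p.167]{BH1999}'' and moves on. So you have supplied a self-contained proof where the paper gives only a citation.

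A small comment on presentation: the CN characterization as you phrase it (``for every triple $p,q,r$ there is a midpoint $m$ of $q,r$ satisfying \dots'') leaves open whether $m$ may depend on $p$. Your construction in fact produces a midpoint $M=(m_{1},m_{2})$ that depends only on $Q$ and $R$, and you then verify the CN inequality for \emph{every} $P$; this is the form of the Bruhat--Tits criterion that cleanly yields $\cat(0)$ (and, as a byproduct, forces uniqueness of midpoints in the product). It would be worth stating the characterization in that sharper form so the reader sees that no quantifier issue arises. The additivity of squared distances under the $\ell^{2}$-product is exactly what makes both the geodesic and CN steps go through, and your write-up makes that mechanism explicit in a way the bare citation does not.
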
 

By the definition of $\cat(0)$ spaces, we obtain:
\begin{lem}\label{lem:dilcat}
Let $L\in (0, \infty)$.
If $(X, d)$ is  a geodesic space  (resp.~$\cat(0)$ space),  
then  so is $(X, L\cdot d)$. 
\end{lem}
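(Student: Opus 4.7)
The plan is to verify both claims by direct inspection, exploiting that both the geodesic identity and the $\cat(0)$ inequality are $1$-homogeneous in the metric, so everything scales linearly in $L$.

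For the geodesic statement, let $x, y \in X$ and let $\gamma:[0,1]\to X$ be a geodesic from $x$ to $y$ in $(X,d)$. Multiplying the defining identity $d(\gamma(s),\gamma(t))=|s-t|\cdot d(x,y)$ by $L$ gives $(L\cdot d)(\gamma(s),\gamma(t))=|s-t|\cdot (L\cdot d)(x,y)$, so the same $\gamma$ is a geodesic from $x$ to $y$ in $(X,L\cdot d)$. Since $x, y$ were arbitrary, $(X, L\cdot d)$ is geodesic, and in fact geodesics in $(X,L\cdot d)$ coincide set-theoretically with those in $(X,d)$.

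For the $\cat(0)$ statement, first use the previous paragraph to identify geodesic triangles in $(X,L\cdot d)$ with those in $(X,d)$. The key observation is then that if $\bar\Delta\subset \rr^{2}$ is a comparison triangle for a geodesic triangle $\Delta$ in $(X,d)$, then applying the similarity $z\mapsto L\cdot z$ of $\rr^{2}$ produces a triangle whose Euclidean side lengths are $L$ times those of $\bar\Delta$, and hence equal to the side lengths of $\Delta$ measured under $L\cdot d$. Thus $L\cdot\bar\Delta$ is a comparison triangle for $\Delta$ in $(X,L\cdot d)$, and comparison points transform under the same dilation. Multiplying the $\cat(0)$ inequality $d(x,y)\le d_{\rr^{2}}(\bar x, \bar y)$ valid in $(X,d)$ by $L$ then yields $(L\cdot d)(x,y)\le d_{\rr^{2}}(L\bar x, L\bar y)$, which is precisely the $\cat(0)$ inequality in $(X,L\cdot d)$.

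There is essentially no substantive obstacle: the lemma is a scaling fact. The only point meriting a line of care is the transformation law for comparison triangles and comparison points, which reduces to the trivial statement that the map $z\mapsto L\cdot z$ is a similarity of $\rr^{2}$ multiplying distances by $L$. With that in hand, both defining conditions are $1$-homogeneous in the metric and carry over immediately from $(X,d)$ to $(X,L\cdot d)$.
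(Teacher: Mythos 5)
Your proof is correct and takes the same route the paper intends: the paper offers no written argument beyond ``by the definition of $\cat(0)$ spaces,'' and your direct verification---that the geodesic identity is $1$-homogeneous in the metric and that comparison triangles and comparison points transform under the similarity $z\mapsto L\cdot z$ of $\rr^{2}$---is exactly the routine check being left to the reader. Nothing is missing.
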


\subsection{Arcwise-connectedness}

We say that a subset of a topological space is 
a \emph{topological arc} if it is homeomorphic to  $\rr$. 
A topological space $X$ is said to be 
\emph{arcwise-connected} if 
for all two points $p, q\in X$ with $p\neq q$, there exists a 
topological embedding  $f: [0, 1]\to X$ with 
$f(0)=p$ and $f(1)=q$. 
The following is deduced from \cite[Corollary 31.6]{W1970}, and related to the Hahn--Mazurkiewicz theorem. 
\begin{lem}\label{lem:patharc}
A Hausdorff topological space is 
 path-connected if and only if  it is
arcwise-connected. 
In particular, 
if a Hausdorff  space $X$ has at least two points 
and $X$ 
is a continuous image of  $\rr$, 
then $X$ contains a topological   arc as a subspace. 
\end{lem}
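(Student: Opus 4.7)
The first assertion is classical; the easy direction (arcwise-connected implies path-connected) is immediate because any topological arc from $p$ to $q$ is homeomorphic to $[0,1]$ and hence is itself a path. For the converse, the plan is to start with a continuous path $\gamma:[0,1]\to X$ with $\gamma(0)=p\neq q=\gamma(1)$ in a Hausdorff space $X$ and extract a topological arc between $p$ and $q$ by collapsing the redundant loops of $\gamma$.

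Concretely, for each $t\in [0,1]$ set
\[
a(t)=\inf\gamma^{-1}(\gamma(t)),\qquad b(t)=\sup\gamma^{-1}(\gamma(t)).
\]
Hausdorffness of $X$ makes $\gamma^{-1}(\gamma(t))$ closed, so $\gamma(a(t))=\gamma(b(t))=\gamma(t)$. The set $U=\bigcup_{t}(a(t),b(t))$ is open in $[0,1]$, hence a countable disjoint union of open intervals $(a_n,b_n)$, each satisfying $\gamma(a_n)=\gamma(b_n)$. Declare $s\sim t$ if they lie in a common closed interval $[a_n,b_n]$; the quotient $J:=[0,1]/\sim$ inherits a linear order from $[0,1]$ and is compact, connected, and Hausdorff, hence is homeomorphic to $[0,1]$. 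By the maximality built into the definitions of $a(t)$ and $b(t)$, the map $\gamma$ factors through the quotient as an injective continuous map $\bar\gamma:J\to X$; since $J$ is compact and $X$ is Hausdorff, $\bar\gamma$ is a topological embedding, and its image is the desired arc from $p$ to $q$.

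For the second assertion, suppose $X$ is Hausdorff with at least two points and is a continuous image of $\rr$ via some $f:\rr\to X$. Pick distinct $p,q\in X$ and choose $a\in f^{-1}(p)$, $b\in f^{-1}(q)$. The restriction of $f$ to the closed interval with endpoints $a$ and $b$, reparametrized by an affine homeomorphism from $[0,1]$, is a continuous path from $p$ to $q$, so $X$ is path-connected and, by the first part, contains a topological arc as a subspace.

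The main obstacle is the quotient step: one must verify that the family $\{[a_n,b_n]\}$ is well-behaved enough (pairwise disjoint with interiors exhausting $U$) for $J$ to be homeomorphic to $[0,1]$, and that the maximality in the definitions of $a(t),b(t)$ forces $\bar\gamma$ to be injective on $J$. Both points are treated in the general-topology framework of \cite[Corollary 31.6]{W1970}, to which the author defers in lieu of reproducing the argument.
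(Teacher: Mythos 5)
There is a genuine gap at the key step of your argument for the hard direction. The claim that each connected component $(a_n,b_n)$ of $U=\bigcup_{t}(a(t),b(t))$ satisfies $\gamma(a_n)=\gamma(b_n)$ is false: the intervals $(a(t),b(t))$ associated with \emph{different} values of $\gamma$ can overlap and chain together, so a single component of $U$ may be a union of such intervals whose extreme endpoints carry different values. Concretely, take $\gamma\colon[0,1]\to\mathbb{R}$ piecewise linear with $\gamma(0)=0$, $\gamma(1/4)=2$, $\gamma(1/2)=1$, $\gamma(3/4)=3$, $\gamma(1)=3/2$. The level set of $1$ is $\{1/8,1/2\}$, contributing the interval $(1/8,1/2)$, while the level set of $3/2$ is $\{3/16,3/8,9/16,1\}$, contributing $(3/16,1)$; these overlap, and one checks that $U$ has the single component $(1/8,1)$, yet $\gamma(1/8)=1\neq 3/2=\gamma(1)$. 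Hence $\gamma$ is not constant on the equivalence classes, $\bar\gamma$ is not well defined as a factorization of $\gamma$, and in this example the terminal point $q=\gamma(1)$ is swallowed by the collapsed class, so the construction does not produce an arc from $p$ to $q$ at all. This is exactly the reason the naive one-step loop erasure is not a proof of the theorem.

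Be aware also that the paper itself offers no proof of this lemma; it only cites Willard, Corollary 31.6, so your deferral at the end is carrying the entire weight of the argument, and the sketch preceding it is not an outline of the cited proof. The standard correct routes are: (i) show that the image of a path in a Hausdorff space is a Peano continuum --- compactness, connectedness and local connectedness are straightforward, but metrizability of a Hausdorff continuous image of $[0,1]$ is a nontrivial ingredient --- and then invoke the arcwise connectivity of Peano continua, which is itself proved by a chain (property S) argument rather than loop erasure; or (ii) carry out loop collapsing correctly, which requires Zorn's lemma applied to families of pairwise disjoint nondegenerate closed intervals $[c,d]$ with $\gamma(c)=\gamma(d)$; maximality of such a family is what forces injectivity of the induced map on the quotient. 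Your treatment of the second assertion of the lemma is fine once the first assertion is granted.
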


For a topological space $X$, 
we denote by $\tdim X$ the topological   dimension of 
$X$. 
To construct topological embeddings of compact metrizable spaces into 
the Gromov--Hausdorff space (especially, to  guarantee  injectivity), we use the following lemma. 
\begin{lem}\label{lem:productinterval}
Let $X$ and $Y$ be Hausdorff topological spaces
possessing no isolated points. 
Then any topological arc in  the product space 
$X\times Y$ 
 has no interior points.
\end{lem}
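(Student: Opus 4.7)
The plan is to argue by contradiction: suppose a topological arc $A \subseteq X \times Y$ has a point in its interior. Extracting a basic open rectangle $U \times V \subseteq A$ with $U, V$ non-empty open, we see that $U \times V$ is open in $A \cong \rr$, hence homeomorphic to an open subset of $\rr$, and in particular locally connected.

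The next step is to propagate local connectedness down to the factors. Since the coordinate projections are open and continuous, the image of any connected open neighborhood of $(u_{0}, v_{0}) \in U \times V$ under the first projection is an open connected neighborhood of $u_{0}$ in $U$; hence $U$, and symmetrically $V$, are locally connected. Because $X$ and $Y$ have no isolated points, neither do $U$ and $V$, so local connectedness forces the connected component of every $u_{0} \in U$ to be open and to contain at least two points (otherwise $\{u_{0}\}$ would be an open component, hence an isolated point of $X$). Fix $(u_{0}, v_{0}) \in U \times V$ and let $U^{*} \subseteq U$ and $V^{*} \subseteq V$ be the connected components of $u_{0}$ and $v_{0}$ respectively. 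Then $U^{*} \times V^{*}$ is a connected open subset of $U \times V$, and as a connected open subset of $\rr$ (via the identification of $U \times V$ with an open subset of $\rr$), it is homeomorphic to $\rr$.

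The main step is now to derive a contradiction by showing that $(U^{*} \times V^{*}) \setminus \{(u_{0}, v_{0})\}$ is connected, contrary to the fact that $\rr$ minus a point is disconnected. This is a slicing argument: for two points $p = (a, b)$ and $q = (c, d)$ in the punctured rectangle, if $a \ne u_{0}$ and $d \ne v_{0}$ the connected slices $\{a\} \times V^{*}$ and $U^{*} \times \{d\}$ join $p$ through $(a, d)$ to $q$ while avoiding $(u_{0}, v_{0})$; the degenerate cases $a = u_{0}$ or $d = v_{0}$ are reduced to the previous one by first moving $p$ or $q$ to an auxiliary point via one further slice, which is possible because $|U^{*}|, |V^{*}| \ge 2$.

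The main obstacle I anticipate is the step yielding $U^{*}$ and $V^{*}$ connected with at least two points: it relies on combining the local connectedness inherited from the open-in-$\rr$ structure of $U \times V$ with the absence of isolated points inherited from $X$ and $Y$. Both hypotheses are genuinely needed — for instance, if $X$ had an isolated point $x_{0}$ and $Y = \rr$, then $\{x_{0}\} \times \rr$ would be an arc in $X \times Y$ coinciding with its own interior. Once this step is secured, the concluding punctured-rectangle calculation is clean connectedness bookkeeping.
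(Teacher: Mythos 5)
Your proof is correct, but it takes a genuinely different route from the paper's. The paper also reduces to an open rectangle inside the arc, but then argues via dimension theory: since the projections are open and the factors have no isolated points, each projection of a small open sub-arc has at least two points and (being a path-connected Hausdorff space, by the paper's Lemma 3.4) contains a topological arc, so the arc would contain a product of two arcs, forcing its covering dimension to be at least $\tdim \rr^{2}=2$ and contradicting $\tdim\rr=1$. You instead run a cut-point argument: local connectedness of an open subset of $\rr$ pushes down through the open projections to the factors, the no-isolated-point hypothesis forces the relevant components $U^{*}, V^{*}$ to have at least two points each, and then $U^{*}\times V^{*}$ is an open connected subset of $\rr$ that remains connected after deleting a point --- impossible. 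Both arguments are sound; yours is more elementary in that it avoids dimension theory and the arcwise-connectedness lemma entirely (and in fact never uses the Hausdorff hypothesis, which the paper needs only to invoke that lemma), while the paper's is shorter given the dimension-theoretic facts it already has on hand for the rest of the article. Your slicing argument for the connectedness of the punctured rectangle, including the degenerate cases handled via an auxiliary point, is the standard one and is complete.
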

\begin{proof}
For the sake of contradiction, 
we suppose that 
there exists a subset $V$ of $X\times Y$
which is 
a topological arc with non-empty  interiors. 
Since every point of  $\rr$ has 
a neighborhood system consisting of 
open intervals, 
by extracting an open subset if necessary, 
we may assume that $V$ is open in $X\times Y$. 

Take a point  $(p, q)\in V$. 
Then, there exists  open subsets $A$, $B$  of 
$X$, $Y$, respectively, with $p\in A$ and $q\in B$
and $A\times B \subset V$. 
Since $V$ is a topological arc and open in $X\times Y$, 
there exists an open subset $W$ of $X\times Y$ such that 
$W$ is a topological  arc, 
and $(p, q) \in W$, and $W\subset A\times B$. 
Since $X$ and $Y$ have no isolated points,
and since the projections $\pi_{X}$ and $\pi_{Y}$  are
open maps, 
the sets $\pi_{X}(W)$ and $\pi_{Y}(W)$ contain
at least two points. 
Thus, by Lemma \ref{lem:patharc}, 
they contain topological  arcs. 
By this observation, and by 
 $\tdim\rr^{2}=2$, we have
$2\le \tdim(\pi_{X}(W)\times \pi_{Y}(W))$. 
By this inequality and 
 $\pi_{X}(W)\times \pi_{Y}(W)\subset A\times B$, 
we have $2\le \tdim V$. This contradicts 
$\tdim V=\tdim \rr=1$. 
\end{proof}

\subsection{Topological embeddings}
For every $n\in \zz_{\ge 1}$, 
 we denote by $\widehat{n}$
the set $\{1, \dots, n\}$. 
In what follows, we consider that 
the set $\widehat{n}$ 
is always equipped with the discrete topology. 

The next proposition is an analogue of 
\cite[Proposition 4.4]{Ishiki2021fractal}. 
Unlike the proof of  \cite[Proposition 4.4]{Ishiki2021fractal}, 
 we now  use a construction of  metrics on  product spaces. 
\begin{prop}\label{prop:embednm}
Let $\mathscr{S}$ be any one of 
$\conset$, 
$\pathset$, $\geoset$, and $\catset$.  
Let $n\in \zz_{\ge 1}$ and $m\in \zz_{\ge 2}$. 
Let $H$ be a compact metrizable spaces, 
and $\{v_{i}\}_{i=1}^{n+1}$ be 
$n+1$ many different points in $H$. 
Put $\deltasecond=H\setminus \{\, v_{i}\mid i=1, \dots, n+1\, \}$. 
Let $\{(X_{i}, d_{i})\}_{i=1}^{n+1}$ be a 
sequence of compact metric spaces in $\mathscr{S}$ satisfying  that 
$\grdis((X_{i}, d_{i}), (X_{j}, d_{j}))>0$ for all distinct $i, j$. 
Then there exists a continuous map 
$F: H\times \widehat{m}\to 
\mathscr{S}$
such that 
\begin{enumerate}
\item 
for all $i\in \widehat{n+1}$ and $j\in \widehat{m}$ we have 
$F(v_{i}, j)=(X_{i}, d_{i})$;
\item 
for all $(s, i), (t, j)\in \deltasecond\times \widehat{m}$ with $(s, i)\neq (t, j)$, we have 
$F(s, i)\neq F(t, j)$. 
\end{enumerate}
\end{prop}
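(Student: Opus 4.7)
The plan is to build $F(h,j)$ as an iterated basepoint-amalgam (Proposition 3.1) of rescaled copies of $X_1,\dots,X_{n+1}$ together with a rescaled copy of the metric tree $(\ctree,\disdis[\sigma(h)])$ from Definition 2.2, all glued at a common point $o$. The rescaling on the $X_k$ piece vanishes at every $v_l$ with $l\ne k$ and equals $1$ at $v_k$, so that at $h=v_i$ every piece except $X_i$ collapses. The tree scaling vanishes on $\{v_i\}_i$ but is positive on $\deltasecond$; it couples $h$ with $j$ through a topological embedding $\sigma\colon H\to\qqcube$ and a distinct positive scalar $c_j$, producing the desired injectivity on $\deltasecond\times\widehat{m}$.

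Concretely, fix a compatible metric $d_H$ on $H$, put $r_0=\tfrac12\min_{k\ne l}d_H(v_k,v_l)$, and take continuous bump functions $\mu_k(h)=\max(0,1-d_H(h,v_k)/r_0)$ and $\phi(h)=\min_i d_H(h,v_i)$; then $\mu_k(v_l)=\delta_{kl}$, $\phi^{-1}(0)=\{v_1,\dots,v_{n+1}\}$, and the supports of the $\mu_k$'s are pairwise disjoint. Fix a topological embedding $\sigma\colon H\to\qqcube$ (which exists since $H$ is compact metrizable) and distinct scalars $0<c_1<\dots<c_m$, chosen small enough that the diameter of the rescaled tree is strictly smaller than the diameter of any nontrivial attached $X_k$-piece. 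Fix base points $p_k\in X_k$ and identify all $p_k$ with $0_0\in\ctree$ as a single point $o$. Endow $X_k$ with the pseudo-metric $\mu_k(h)d_k$ and $\ctree$ with $c_j\phi(h)\disdis[\sigma(h)]$, and let $F(h,j)$ be the metric quotient of the iterated basepoint-amalgam of these pseudo-metric spaces at $o$. Then $F(v_i,j)\cong(X_i,d_i)$ by direct inspection, membership in $\mathscr{S}$ follows by iterated Proposition 3.1 (using that $\ctree$ is $\cat(0)$ by Definition 2.2) together with the evident preservation of (path-)connectedness under basepoint-gluing, and continuity of $F$ follows from Propositions 2.1 and 2.4 combined with continuity of $\mu_k,\phi$.

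Condition (2) is the main obstacle. Given $F(s,i)=F(t,j)$ with $(s,i),(t,j)\in\deltasecond\times\widehat{m}$, the tree component is nondegenerate in both, and the idea is to recover it from the isometry class via the distinctive geometry at $o$: the point $o$ carries infinitely many small branches of scaled lengths $c_j\phi(h)a_k$ for $k\in\zz_{\ge 0}$, drawn from $\sigma(h)\in\qqcube$, and by the choice of $c_j$ these branches are strictly shorter than any attached $X_k$-piece, so they are recognisable as the tree. Once the scaled tree is recovered, Proposition 2.3 applied with $K=c_i\phi(s)$ and $L=c_j\phi(t)$ yields $\sigma(s)=\sigma(t)$ and $c_i\phi(s)=c_j\phi(t)$; injectivity of $\sigma$ forces $s=t$, and then $c_i=c_j$ forces $i=j$. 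The delicate point is executing this extraction rigorously — arguing that no subspace of a rescaled $X_k$ can mimic the precise branching profile drawn from $\qqcube=\prod_i[2^{-2i},2^{-2i+1}]$ — and this is where the specific scaling chosen for $c_j$ and the gap structure of $\qqcube$ carry the weight of the argument.
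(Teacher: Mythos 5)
Your construction differs from the paper's at the decisive point: you glue rescaled copies of $X_{1},\dots,X_{n+1}$ and the tree together at a single point (a wedge), whereas the paper forms the weighted $\ell^{2}$-product $P=\prod_{i=1}^{n+1}X_{i}$ with weights $\zeta_{i}(s)$ and attaches the scaled tree to $P$ at one point. Condition (1), membership in $\mathscr{S}$, and continuity do go through for your wedge (Proposition \ref{prop:amal1} handles iterated one-point gluings for all four classes, and Proposition \ref{prop:metconti} gives continuity). The gap is exactly where you flag it: condition (2). You need every isometry $F(s,i)\to F(t,j)$ to carry the tree onto the tree before Proposition \ref{prop:qqinj} can be invoked, and the mechanism you propose --- choose $c_{j}$ so that the tree's legs are shorter than every nontrivial $X_{k}$-piece --- cannot work. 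First, the required inequality is not achievable uniformly: as $h$ approaches the boundary of the support of $\mu_{k}$ one has $\mu_{k}(h)\operatorname{diam}(X_{k})\to 0$ while $\phi(h)$ stays close to $r_{0}>0$, so there the scaled tree is \emph{larger} than the $X_{k}$-piece. Second, and more fundamentally, the $X_{k}$ are arbitrary members of $\mathscr{S}$: one of them could itself be a segment, or a spider, or even a space of the form $(\ctree,\disdis[\sqsq{b}])$ glued at its branch point. Then the legs contributed by $X_{k}$ at $o$ are metrically indistinguishable from tree legs, the subspace you would feed into Proposition \ref{prop:qqinj} is not of the form $(\ctree,K\cdot\disdis[\sqsq{a}])$, and the extraction fails; no choice of $c_{j}$ and no appeal to the gap structure of $\qqcube$ repairs this, because the interloping legs come from the unconstrained spaces $X_{k}$, not from $\qqcube$.

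The paper's product construction is designed precisely to close this hole. After reducing to the case where $n>1$ or both $X_{1}$ and $X_{2}$ have at least two points, $P$ is a product of two connected spaces each with at least two points, so Lemma \ref{lem:productinterval} shows that no point of $P$ has a neighborhood base of topological arcs, whereas every point of the attached tree does. Hence the isometry-invariant set $\mathcal{A}(Z,D_{s,k})$ (the closure of the set of points with arc neighborhood bases) is exactly the scaled tree, and Proposition \ref{prop:qqinj} then yields injectivity on $\deltasecond\times\widehat{m}$. If you wish to keep a wedge-type construction, you must supply a different isometry invariant that separates the tree from arbitrary wedge summands taken from $\mathscr{S}$; as written, no such invariant is provided and the injectivity step is unproved.
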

\begin{proof}
Since the case where  $n=1$ and either $X_{1}$ or  $X_{2}$ is the one-point metric space follows from 
 cases of $n>1$, we may assume that $n>1$,  or 
$n=1$ and both   $X_{1}$ and  $X_{2}$ contain at least two points. 

We denote by 
$\deltasan(H, m)$ the quotient space 
of $H\times \widehat{m}$ in a such a way that  for each $i\in \widehat{n+1}$ we identify $m$ many  points 
$\{\, (v_{i}, j)\mid  j\in \widehat{m}\, \}$ 
as a single point. 
Since $\deltasan(H, m)$ is compact and metrizable, 
there exists a topological embedding 
$\topemb:\deltasan(H, m)\to \qqcube$ 
(this is the Urysohn Metrization  Theorem, see \cite{Kelly1975}). 
Let $\pi_{H,m}:H\times \widehat{m}\to \deltasan(H, m)$ be the canonical projection. 
Since every metrizable space is perfectly normal 
(see \cite[Proposition 4.18]{P1975}), 
 for each $i\in \widehat{n+1}$, there exists a continuous function 
$\zeta_{i}:H\to [0, 1]$ such that 
$\zeta_{i}^{-1}(0)=\{\, v_{j}\mid j\neq i\, \}$ and 
$\zeta_{i}^{-1}(1)=\{v_{i}\}$, and there exists 
 a continuous function $\xi: H\to [0, 1]$
 with 
$\xi^{-1}(0)=\{\, v_{i}\mid i=1, \dots, n+1\, \}$. 
We define 
$P=\prod_{i=1}^{n+1}X_{i}$. 
For each $s\in H$,  we define  
a pseudo-metric $E_{s}$ on $P$ by 
\[
E_{s}(x, y)=
\sqrt{\sum_{i=1}^{n+1}(\zeta_{i}(s)\cdot d_{i}(x_{i}, y_{i}))^{2}}. 
\] 
Note that $E_{s}$  is  a metric if and only if 
 $s\in \deltasecond$, and note that 
 if $s\in \deltasecond$, the metric $E_{s}$ 
 generates the product topology on 
 $P=\prod_{i=1}^{n+1}X_{i}$. 
 Since each $X_i$ is compact, 
 the map $W:H\to \pmet(P)$
 defined by $W(s)=E_{s}$ is continuous. 
 By the fact that the connectedness and path-connectedness are 
 invariant under the topological product, 
 and 
 by Lemmas \ref{lem:prodcat} and \ref{lem:dilcat}, 
 if $s\in \deltasecond$, 
 the metric space $(P, E_{s})$ is in $\mathscr{S}$. 
 Take $p\in P$. 
 We identify $p$ with $1_{0}\in \ctree$, and 
 we consider $P\cap \ctree=\{p\}$. 
We put 
$Z=P\cup \ctree$. 
For each $(s, k)\in H\times \widehat{m}$,   we define a symmetric function  $D_{s, k}$ on $Z\times Z$ by 
\begin{align*}
	D_{s, k}(x, y)=
		\begin{cases}
		E_{s}(x, y) & \text{if $x, y\in P$;}\\
		\xi(s)\disdis[\topemb\circ \pi_{H, m}(s, k)](x, y) & \text{if $x, y\in \ctree$;}\\
		E_{s}(x, p)+\xi(s)\disdis[\topemb\circ \pi_{H, m}(s, k)](p, y) & \text{if $(x, y)\in P\times \ctree$. }
		\end{cases}
\end{align*}

Proposition  \ref{prop:amal1} implies that 
 $D_{s, k}$ is a pseudo-metric on $Z$ for all 
 $(s, k)\in H\times \widehat{m}$. 
We see that
$D_{s, k}$ is  a metric if and only if 
 $s\in \deltasecond$. 
 We also  see that 
 for all 
 $i\in \widehat{n+1}$ and  $k\in \widehat{m}$,   
  the quotient metric  space 
$\left(Z_{/D_{v_{i}, k}}, [D_{v_{i}, k}]\right)$ is isometric to 
$(X_{i}, d_{i})$. 
Note that if $s\neq v_{i}$ for any $i\in \widehat{n+1}$, 
the quotient metric space 
$\left(Z_{/D_{s, k}}, [D_{s,  k}]\right)$ is 
isometric to 
$\left(Z, D_{s, k}\right)$. 
By Proposition  \ref{prop:amal1},  
we have 
$(Z, D_{s, k})\in \mathscr{S}$ for all 
$(s, k)\in \deltasecond\times \widehat{m}$. 
We define  
$F: H\times \widehat{m}\to \mathscr{S}$ 
 by 
\[
F(s, k)=
\begin{cases}
(X_{i}, d_{i}) & \text{if $s=v_{i}$ for some $i\in \widehat{n+1}$;}\\
\left(Z, D_{s, k}\right) & \text{otherwise.}
\end{cases}
\]
Then the condition (1) is satisfied. 
Since the map $W:H\to \pmet(P)$ is continuous, 
by the definition of $D_{s, k}$, and by 
Proposition \ref{prop:qqconti}, 
the map 
$J:H\times \widehat{m}\to \pmet(Z)$ defined by 
$J(s, k)=D_{s, k}$ is continuous. 
Therefore, 
by Proposition  \ref{prop:metconti}, 
the map $F$ is continuous. 

Next we prove the condition (2). 
For a metric space $(S, h)$, 
we denote by  $\mathcal{A}(S, h)$ the closure of 
the set of all  points possessing neighborhood systems 
consisting of  topological  arcs. 
Note that if metric spaces $(S, h)$ and $(S', h')$ are isometric to each other, then so are 
$\mathcal{A}(S, h)$ and $\mathcal{A}(S', h')$. 
From  the assumption  that $n>1$, or $n=1$ and 
both $X_{1}$ and $X_{2}$ contain at least two points, 
it follows that   $P$ can be represented as the product of 
two connected spaces possessing at least two points. 
By this observation and  by Lemma \ref{lem:productinterval}, 
we have 
$\mathcal{A}(P, E_{s})=\emptyset$
for all $s\in \deltasecond$. 
Since all points in  $\Upsilon\setminus \{0_{0}\}\cup \{\, 1_{i}\mid i\in \zz_{\ge 0}\, \}$ have a neighborhood systems 
consisting of  topological  arcs, 
we have $\mathcal{A}(\Upsilon, R[\mathbf{a}])=\Upsilon$ for all 
$\mathbf{a}\in \qqcube$. 
Thus, by the definition of $D_{s, k}$, 
we have $\mathcal{A}(Z, D_{s, k})=\Upsilon$
for 
all $(s, k)\in \deltasecond\times \widehat{m}$.  
This implies that 
the metric subspace $\mathcal{A}(Z, D_{s, k})$  of $(Z, D_{s, k})$
is 
isometric to 
$(\ctree, \xi(s)\cdot 
\disdis[\topemb\circ \pi_{H,m}(s, k)])$ for 
all $(s, k)\in \deltasecond\times \widehat{m}$. 
Since $\mathcal{A}$ is isometrically invariant, 
and since $\pi_{H,m}$ is injective on 
$\deltasecond\times \widehat{m}$, 
Proposition \ref{prop:qqinj} implies that  
the condition (2) is satisfied. 
This finishes the proof. 
\end{proof}

The proof of Theorem \ref{thm:connectedemb} is 
similar to   \cite[Theorem 1.3]{Ishiki2021fractal}. 
\begin{proof}[Proof of Theorem \ref{thm:connectedemb}]
Let $\{(X_{i}, d_{i})\}_{i=1}^{n+1}$ be a sequence of metric spaces  in $\mathscr{S}$ such that 
$\grdis((X_{i}, d_{i}), (X_{j}, d_{j}))>0$ for all distinct $i, j$. 
Put $m=n+2$. 
Let 
$F: H\times \widehat{m}\to \mathscr{S}$ be a map stated in 
Proposition \ref{prop:embednm}. 
For all  $i\in \widehat{n+1}$ and 
$j\in \widehat{m}$, 
we define 
$S(i, j)=
\left\{\, s\in \deltasecond
\  \middle | \ 
F(s, j)=(X_{i}, d_{i})
\, \right\}$. 
By the conditions (1) and (2) in Proposition \ref{prop:embednm}, 
for all $i\in \widehat{n+1}$
the set $\bigcup_{j=1}^{m}S(i, j)$
is empty or a singleton. 
Thus, 
the number of pairs $(i, j)\in \widehat{n+1}\times \widehat{m}$ such that 
$\card(S(i, j))>0$ is at most $n+1$. 
 By $m=n+2$, we obtain 
$\widehat{m}\setminus \bigcup_{i=1}^{n+1}\{\, j\in \widehat{m}\mid \card(S(i, j))>0\, \}\neq \emptyset$, 
and we can take $k$ from this set. 
Then 
$\{\, (X_{i}, d_{i})\mid i=1, \dots, n+1\, \}\cap 
F\left(\deltasecond\times \{k\}\right)
=\emptyset$. 
Therefore,  the function 
$\Phi:H\to \mathscr{S}$ defined by $\Phi(s)=F(s, k)$ is injective, and hence $\Phi$ is  a  topological embedding since $H$ is compact. 
This completes the proof. 
\end{proof}

\section{The pointed Gromov--Hausdorff space}\label{sec:pointed}
In this section, we  prove Theorem \ref{thm:cat}.
In what follows, for a metric space $(X, d)$, we put $B_{d}(x, 0)=\{x\}$ and $B_{d}(x, \infty)=X$. 
We first define a rough isometry, which gives a way to 
estimate $\pgrdis$. 

Let $(X, d, a)$ and $(Y, e, b)$ be pointed metric spaces. 
Let $R\in (0, \infty]$ and 
$\epsilon \in (0, \infty)$ with $R>\epsilon$. 
A map $f: B_{d}(a, R)\to Y$ 
(or $f: X\to Y$) is said to be 
an 
\emph{$(R, \epsilon)$-rough isometry from $(X,d,  a)$ to 
$(Y, e, b)$} 
if the following conditions are satisfied:
\begin{enumerate}
\item 
$e(f(a), b)\le \epsilon$. 
\item 
$B_{e}(b, R-\epsilon)\subset N_{e}(f(B_{d}(a, R)), \epsilon)$. 
\item 
for all $x, y\in B_{d}(a, R)$, 
we have 
$|d(x, y)-e(f(x), f(y))|\le \epsilon$. 
\end{enumerate}
Remark that our definition of a rough isometry is 
the same as the definition  in \cite{herron2016gromov} 
except that  our one is limited to balls and  contains the parameter of  radius $R$. 
The following is deduced from  \cite[Lemma 3.4]{herron2016gromov}. 

\begin{lem}\label{lem:estipgh}
Let $(X, d, a)$ and $(Y, e, b)$ be  pointed metric spaces. 
Let $R\in (0, \infty]$ and $\epsilon\in (0, \infty)$ with $R>\epsilon$. 
If there exists an $(R, \epsilon)$-rough isometry 
$f: B_{d}(a, R)\to Y$ from 
$(X, d, a)$ to 
$(Y, e, b)$, 
then we have 
\[
\pgrdis((X, d, a), (Y, e, b))<
\max\left\{2\epsilon, \frac{1}{R-\epsilon}\right\}. 
\]
\end{lem}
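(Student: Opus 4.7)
The plan is to turn the given $(R,\epsilon)$-rough isometry $f\colon B_d(a,R)\to Y$ into an explicit $(t;a,b)$-admissible metric $h$ on the disjoint union $X\sqcup Y$ for every $t$ exceeding $\max\{2\epsilon,1/(R-\epsilon)\}$, which by the very definition of $\ppgrdis$ (and hence of $\pgrdis$) yields the stated bound.

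The natural construction is the metric amalgamation through the graph of $f$: set $h|_{X^2}=d$, $h|_{Y^2}=e$, and for $x\in X$, $y\in Y$ define
\[
h(x,y):=\inf\bigl\{\,d(x,x')+\epsilon+e(f(x'),y)\ \big|\ x'\in B_d(a,R)\,\bigr\}.
\]
The ``$+\epsilon$'' buffer simultaneously guarantees that $h$ is a genuine metric rather than just a pseudo-metric (cross distances are at least $\epsilon>0$) and lets the distortion condition~(3) of the rough isometry, $|d(x_1,x_2)-e(f(x_1),f(x_2))|\le \epsilon$, drive the two nontrivial mixed triangle inequalities $d(x,x')\le h(x,y)+h(y,x')$ and $e(y,y')\le h(y,x)+h(x,y')$ after a short manipulation using the infimum defining $h$ on cross pairs.

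Admissibility is then verified by plugging in the right witnesses. Taking $x'=a$ in the infimum and using~(1) gives $h(a,b)\le\epsilon+e(f(a),b)\le 2\epsilon$. For $x\in X$ with $d(a,x)\le t^{-1}\le R$, taking $x'=x$ yields $h(x,f(x))\le\epsilon$, so $x\in N_h(Y,t)$ whenever $t>\epsilon$. For $y\in Y$ with $e(b,y)\le t^{-1}\le R-\epsilon$, condition~(2) supplies $x''\in B_d(a,R)$ with $e(y,f(x''))<\epsilon$ strictly, whence $h(y,x'')<2\epsilon$ strictly. Combining these, $h$ is $(t;a,b)$-admissible for every $t$ satisfying $t>2\epsilon$ and $t\ge 1/(R-\epsilon)$, so $\ppgrdis\le \max\{2\epsilon,1/(R-\epsilon)\}$.

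The remaining subtlety, which I expect to be the main obstacle, is upgrading this $\le$ to the strict inequality $<$ in the statement. The strictness is powered by the strictness built into $N_e$ in condition~(2) (which already delivers $h(y,x'')<2\epsilon$ strictly) together with the openness of the admissibility condition $h(a,b)<t$; if necessary, one perturbs the amalgamation formula by an auxiliary parameter $\eta>0$ so that $h(a,b)<2\epsilon$ strictly, at the cost of only an $O(\eta)$ worsening that can be absorbed since the target bound $\max\{2\epsilon,1/(R-\epsilon)\}$ is itself strictly larger than the computed margins. The bound then passes to $\pgrdis=\min\{\ppgrdis,1/2\}$ unchanged.
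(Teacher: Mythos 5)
The paper itself does not prove this lemma --- it is quoted as a consequence of \cite[Lemma 3.4]{herron2016gromov} --- so there is no in-paper argument to compare against; I am judging your proposal on its own terms. Your amalgamation $h(x,y)=\inf_{x'\in B_{d}(a,R)}\{d(x,x')+\epsilon+e(f(x'),y)\}$ is the right construction: the $\epsilon$-buffer together with condition (3) of the rough isometry does make $h$ a genuine metric extending $d$ and $e$, and your estimates $h(a,b)\le 2\epsilon$, $h(x,f(x))\le\epsilon$ for $x\in B_{d}(a,R)$, and $h(y,x'')<2\epsilon$ for $y\in B_{e}(b,R-\epsilon)$ are all correct. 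This cleanly yields $\ppgrdis((X,d,a),(Y,e,b))\le\max\{2\epsilon,\,1/(R-\epsilon)\}$.

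The genuine gap is the strict inequality, and your final paragraph does not close it. The $\eta$-perturbation you propose only pushes the quantities $h(a,b)$ and $h(y,x'')$ strictly below $2\epsilon$ (indeed any buffer $\delta\in[\epsilon/2,\epsilon)$ already does this while keeping $h$ a metric), so it handles the branch where the maximum equals $2\epsilon>1/(R-\epsilon)$. But when the maximum is $1/(R-\epsilon)$, the binding constraint is not any of these distances: your verification of $B_{h}(b,t^{-1})\subset N_{h}(X,t)$ invokes condition (2) and therefore requires $t^{-1}\le R-\epsilon$, i.e.\ $t\ge 1/(R-\epsilon)$. The set of $t$ for which your $h$ is admissible is then an interval with left endpoint exactly $1/(R-\epsilon)$, and since $\ppgrdis$ is an infimum this gives only $\le$, never $<$; shrinking the buffer cannot help because it does not enlarge the ball of $Y$ that condition (2) controls. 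An additional input is needed, for instance properness: if $B_{e}(b,R-\epsilon)$ is compact, then since $N_{e}(f(B_{d}(a,R)),\epsilon)$ is open, condition (2) self-improves to $B_{e}(b,R-\epsilon+\eta_{0})\subset N_{e}(f(B_{d}(a,R)),\epsilon)$ for some $\eta_{0}>0$, which permits admissible $t$ strictly below $1/(R-\epsilon)$. As the lemma is stated for arbitrary pointed metric spaces, this compactness is not available from the hypotheses as written (though it is in the paper's applications, where all spaces are proper), so the strict inequality in full generality is exactly the point your argument leaves open.
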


The next proposition is an analogue of Proposition 
\ref{prop:embednm}. 
\begin{prop}\label{prop:embednmpointed}
Let $n\in \zz_{\ge 1}$ and $m\in \zz_{\ge 2}$. 
Let $H$ be a compact metrizable space, 
and $\{v_{i}\}_{i=1}^{n+1}$ be 
$n+1$ different points in $H$. 
Put $\deltasecond=H\setminus \{\, v_{i}\mid i=1, \dots, n+1\, \}$. 
Let $\{(X_{i}, d_{i}, a_{i})\}_{i=1}^{n+1}$ be a 
sequence  in $\catsettwo$ satisfying  that 
$\pgrdis((X_{i}, d_{i}, a_{i}), (X_{j}, d_{j}, a_{j}))>0$ for all distinct $i, j$. 
Then there exists a continuous map 
$F: H\times \widehat{m}\to 
\catsettwo$
such that 
\begin{enumerate}
\item 
for all $i\in \widehat{n+1}$ and $j\in \widehat{m}$ we have 
$F(v_{i}, j)=(X_{i}, d_{i}, a_{i})$;
\item 
for all $(s, i), (t, j)\in \deltasecond\times \widehat{m}$ with $(s, i)\neq (t, j)$, we have 
$F(s, i)\neq F(t, j)$. 
\end{enumerate}
\end{prop}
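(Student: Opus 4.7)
The plan is to adapt the proof of Proposition \ref{prop:embednm} to the pointed proper setting by replacing the full $\ell^{2}$-product $P=\prod X_{j}$ with a product of closed balls whose radii depend on $s$. I will reuse from the compact case the continuous Urysohn-type functions $\zeta_{j}:H\to [0, 1]$ (with $\zeta_{j}^{-1}(1)=\{v_{j}\}$ and $\zeta_{j}^{-1}(0)=\{\, v_{k}\mid k\neq j\, \}$) and $\xi:H\to [0, 1]$ (with $\xi^{-1}(0)=\{\, v_{i}\mid i=1,\dots, n+1\, \}$), the quotient $\deltasan(H, m)$ with its canonical projection $\pi_{H, m}$, and a topological embedding $\topemb:\deltasan(H, m)\to \qqcube$. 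The key observation explaining the modification is that although the weighted pseudo-metric $E_{s}(x, y)=\sqrt{\sum_{j}\zeta_{j}(s)^{2}d_{j}(x_{j}, y_{j})^{2}}$ still quotients $P$ to $(X_{i}, d_{i})$ at $s=v_{i}$, the space $(P, E_{s}, a)$ does not converge to $(X_{i}, d_{i}, a_{i})$ in $\pgrdis$ as $s\to v_{i}$: a bounded $E_{s}$-ball contains points $x$ with $d_{j}(x_{j}, a_{j})$ as large as $R/\zeta_{j}(s)\to \infty$ for $j\neq i$, so the extraneous factors cannot be discarded by a rough isometry.

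To remedy this, choose continuous $r_{j}:H\to [0, \infty]$ with $r_{j}(v_{j})=\infty$, $r_{j}(v_{k})=0$ for $k\neq j$, and $r_{j}(s)\in (0, \infty)$ otherwise (for instance $r_{j}(s)=\zeta_{j}(s)/(1-\zeta_{j}(s))$), and work inside $\Pi_{s}=\prod_{j=1}^{n+1}B_{d_{j}}(a_{j}, r_{j}(s))$ with the weighted $\ell^{2}$ pseudo-metric $E_{s}$ and basepoint $a=(a_{1},\dots, a_{n+1})$. At $s=v_{i}$ every factor other than the $i$-th collapses to the single point $\{a_{j}\}$, so $(\Pi_{v_{i}}, E_{v_{i}}, a)$ is canonically isometric to $(X_{i}, d_{i}, a_{i})$. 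Glue the metric tree $(\ctree, \xi(s)\cdot \disdis[\topemb\circ \pi_{H, m}(s, k)])$ at its root $0_{0}$ to $a$ via Proposition \ref{prop:amal1} to form a pseudo-metric $D_{s, k}$ on $\Pi_{s}\cup \ctree$, and define $F(s, k)$ to be its quotient space with basepoint $[a]$. Closed balls in a proper $\cat(0)$ space are convex, hence proper $\cat(0)$ in their own right, and the $\cat(0)$ property is preserved by $\ell^{2}$-products (Lemma \ref{lem:prodcat}) and by one-point amalgamation (Proposition \ref{prop:amal1}); so $F(s, k)\in \catsettwo$, and $F(v_{i}, k)=(X_{i}, d_{i}, a_{i})$ by construction.

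The principal obstacle is verifying continuity of $F$ in $\pgrdis$, to be carried out through Lemma \ref{lem:estipgh}. At a point $s_{0}\in \deltasecond$, nearest-point projections between $\Pi_{s}$ and $\Pi_{s_{0}}$ restricted to bounded balls yield $(R, \epsilon)$-rough isometries whose distortion is controlled by the joint continuity of $\zeta_{j}, r_{j}, \xi$, and $\topemb\circ \pi_{H, m}$, together with Proposition \ref{prop:qqconti}. At $s_{0}=v_{i}$, use the projection $g:B_{D_{s, k}}(a, R)\to X_{i}$ sending $x\in \Pi_{s}$ to its $i$-th coordinate and sending $\ctree$ to $a_{i}$; for $x, y\in \Pi_{s}\cap B_{D_{s, k}}(a, R)$ and $j\neq i$ we have $\zeta_{j}(s)d_{j}(x_{j}, y_{j})\le 2\zeta_{j}(s)r_{j}(s)$, which together with $\zeta_{j}(s)\to 0$, $\zeta_{i}(s)\to 1$, and $\xi(s)\to 0$ forces both the distortion and the coverage error to $0$, while $R$ may simultaneously be taken to $\infty$ since $r_{i}(s)\to \infty$. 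Lemma \ref{lem:estipgh} then gives $\pgrdis(F(s, k), F(v_{i}, k))\to 0$. Finally, injectivity on $\deltasecond\times \widehat{m}$ follows exactly as in Proposition \ref{prop:embednm}: the isometry invariant $\mathcal{A}(F(s, k))$, the closure of points admitting neighborhood bases of topological arcs, coincides with the tree summand because $\Pi_{s}$ contains no topological arc by Lemma \ref{lem:productinterval}, so Proposition \ref{prop:qqinj} together with the injectivity of $\topemb\circ \pi_{H, m}$ on $\deltasecond\times \widehat{m}$ separates distinct parameter values.
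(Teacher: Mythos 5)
Your proposal follows essentially the same route as the paper: replace the full product by $\prod_{j}B_{d_{j}}(a_{j},\sigma_{j}(s))$ with continuous radius functions that are $\infty$ at $v_{j}$ and $0$ at the other marked points, glue the rescaled tree at the basepoint, verify continuity via rough isometries and Lemma \ref{lem:estipgh} (splitting into the cases $s_{0}\in\deltasecond$ and $s_{0}=v_{i}$), and reuse the $\mathcal{A}$-invariant together with Proposition \ref{prop:qqinj} for injectivity. The only deviation is that you retain the weights $\zeta_{j}(s)$ in the product metric, which the paper drops (the shrinking radii already force the collapse); this is harmless, since on $B_{D_{s,k}}(a,R)$ the extra distortion from the $i$-th weight is at most $(1-\zeta_{i}(s))\cdot 2R/\zeta_{i}(s)$, which vanishes for fixed $R$ as $s\to v_{i}$.
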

\begin{proof}
In what follows, 
we consider that the set $[0, \infty]$ is equipped with 
the canonical  topology homeomorphic to $[0, 1]$. 
Since every metrizable space is perfectly normal, 
and since $[0, \infty]$ is homeomorphic to $[0, 1]$, 
for each $i\in \widehat{n+1}$
we can  take a continuous function 
$\sigma_{i}:H\to [0, \infty]$ such that 
$\sigma_{i}^{-1}(0)=\{\, v_{j}\mid j\neq i\, \}$ and
$\sigma_{i}^{-1}(\infty)=\{v_{i}\}$. 
Put 
 $P(s)=\prod_{i=1}^{n+1}B_{d_{i}}(a_{i}, \sigma_{i}(s))$, and 
$p=(a_{1}, \dots, a_{n+1})\in P(s)$. 
We define a metric $E_{s}$ on $P(s)$ by 
\[
E_{s}(x, y)=
\sqrt{\sum_{i=1}^{n+1}d_{i}(x_{i}, y_{i})^{2}}. 
\]
Note that 
 $\left(P(v_{i}), E_{v_{i}}\right)$ is 
 isometric to $(X_{i}, d_{i})$. 
 Since all closed balls of a $\cat(0)$ space are
 $\cat(0)$ (see \cite[(3) in Proposition 1.4, p.160]{BH1999}), 
 by Lemma \ref{lem:prodcat}, 
 we have 
 $(P(s), E_{s}, p)\in \catsettwo$. 
 We define  a map $W:H\to \grsptwo$ by 
 \[
 W(s)=
 \begin{cases}
(X_{i}, d_{i}, a_{i}) & \text{if $s=v_{i}$ for some $i\in \widehat{n+1}$;}\\
\left(P(s), E_{s}, p\right) & \text{otherwise.}
\end{cases}
 \]
We now  prove that
the map $W:H\to \grsptwo$ is continuous. 
Take arbitrary $s\in H$ and 
$\epsilon \in (0, \infty)$. 

Case 1. ~($s\in \deltasecond$):
For a sufficient small neighborhood $V$ of 
$s$, we have $\max_{i\in \{1, \dots, n+1\}}|\sigma_{i}(s)-\sigma_{i}(t)|\le \epsilon$
for all $t\in V$. 
Since each $(X_{i}, d_{i})$ is 
a geodesic space, 
we have
\[
\hdis(B_{d_{i}}(a_{i}, \sigma_{i}(s)), B_{d_{i}}(a_{i}, \sigma_{i}(t)); X_{i}, d_{i})\le 
|\sigma_{i}(t)-\sigma_{i}(s)|.
\] 
Thus, 
for each $i\in \widehat{n+1}$ and 
for each $y\in V$, we can 
take an $(\infty, \epsilon)$-rough isometry 
$f_{i}: (B_{d_{i}}(a_{i}, \sigma_{i}(s)), d_{i}, a_{i})\to 
(B_{d_{i}}(a_{i}, \sigma_{i}(t)), d_{i}, a_{i})$. 
We define a map 
$g: P(s)\to P(t)$ by 
$g(x)=(f_{1}(x_{1}), \dots, f_{n+1}(x_{n+1}))$. 
By the triangle inequality 
for the $\ell^{2}$-norm, 
for all $x, y\in P(s)$, we obtain 
\begin{align*}
|E_{s}(x, y)-E_{t}(g(x), g(y))|
&\le
\left(\sum_{i=1}^{n+1}|d_{i}(x_{i}, y_{i})-d_{i}(f_{i}(x_{i}), f_{i}(y_{i}))|^{2}\right)^{1/2}\\
&< \sqrt{n+1}\epsilon. 
\end{align*}
Thus, 
 $g$ is an $(\infty, \sqrt{n+1}\epsilon)$-rough isometry. 
By Lemma \ref{lem:estipgh}, we have 
\[
\pgrdis((P(s), E_{s}, p), (P(t), E_{t}, p))<
2\sqrt{n+1}\epsilon.
\] 

Case 2.~($s=v_{i}$ for some $i\in \widehat{n+1}$):
Put $R=\epsilon^{-1}+\epsilon$. 
For a sufficient small neighborhood $V$ of $s (=v_{i})$, 
we have $R<\sigma_{i}(t)$ and 
$\max_{j\neq i}\sigma_{j}(t)\le \epsilon$
 for 
all $t\in V$. 
Then, for all $x, y\in P(t)$, we have 
\begin{align*}
|E_{t}(x, y)-d_{i}(x_{i}, y_{i})|\le 
\left(\sum_{j\neq i}d_{j}(x_{j}, y_{j})^{2}\right)^{1/2}
\le 2\sqrt{n}\epsilon. 
\end{align*}
Thus, the $i$-th projection 
$\pi_{i}:P(t)\to X_{i}$ is an 
$(R, 2\sqrt{n}\epsilon)$-rough isometry. 
By Lemma \ref{lem:estipgh}, we have 
\[
\pgrdis((X_{i}, d_{i}, a_{i}), (P(t), E_{t}, p))< 4\sqrt{n}\epsilon.
\] 
This finishes the proof of the continuity of $W$.

 We identify $p\in P(s)$ with $1_{0}\in \ctree$, and 
 we consider $P(s)\cap \ctree=\{p\}$. 
 We put $Z=P(s)\cup \ctree$. 
By the same way as Proposition \ref{prop:embednm}, 
we define a metric $D_{s, k}$ on $Z$. 
We also define  
$F: H\times \widehat{m}\to \catsettwo$ 
 by 
\[
F(s, k)=
\begin{cases}
(X_{i}, d_{i}, a_{i}) & \text{if $s=v_{i}$ for some $i\in \widehat{n+1}$;}\\
\left(Z, D_{s, k}, p\right) & \text{otherwise.}
\end{cases}
\]
By Proposition \ref{prop:qqconti} and 
the continuity of $W$, 
the map $F$ is continuous and the condition (1) is satisfied. 

 By the same way as Proposition \ref{prop:embednm} using the operator $\mathcal{A}$, 
 the map $F$ satisfies the condition (2). 
This finishes the proof.  
\end{proof}

\begin{proof}[Proof of Theorem \ref{thm:cat}]
By the same method as the proof of  Theorem \ref{thm:connectedemb}, 
using Proposition \ref{prop:embednmpointed} instead of 
Proposition \ref{prop:embednm}, 
we can prove Theorem \ref{thm:cat}. 
\end{proof}

\begin{rmk}
Theorem \ref{thm:connectedemb} (Proposition \ref{prop:embednm}) for the set  $\catset$ can be proven by the same method of the proof of  Theorem \ref{thm:cat} (Proposition \ref{prop:embednmpointed}); however, 
this method can not be used for $\conset$, $\pathset$, nor $\geoset$
since balls of a metric space in $\conset$, $\pathset$, or $\geoset$
do not necessarily belong to   the same class that the ambient space is in. 
In the proof of Theorem \ref{thm:connectedemb} (Proposition \ref{prop:embednm})  in 
this paper, we use  a  method by which we  can deal with all  $\conset$, $\pathset$, 
 $\geoset$, and $\catset$,  simultaneously. 
\end{rmk}

\bibliographystyle{amsplain}
\bibliography{bibtex/continua.bib}

\providecommand{\bysame}{\leavevmode\hbox to3em{\hrulefill}\thinspace}
\providecommand{\MR}{\relax\ifhmode\unskip\space\fi MR }
\providecommand{\MRhref}[2]{%
  \href{http://www.ams.org/mathscinet-getitem?mr=#1}{#2}
}
\providecommand{\href}[2]{#2}
\begin{thebibliography}{10}

\bibitem{AB2010}
A.~G. Askoy and T.~Oikhberg, \emph{Some results on metric trees}, Banach Center
  Pub. \textbf{91} (2010), 9--34.

\bibitem{BH1999}
M.~R. Bridson and A.~Haefliger, \emph{Metric spaces of non-positive curvature},
  Grundlehren der Mathematischen Wissenschaften [Fundamental Principles of
  Mathematical Sciences], vol. 319, Springer, Berlin, 1999.

\bibitem{BBI}
D.~Burago, Y.~Burago, and S.~Ivanov, \emph{A course in metric geometry},
  Graduate Studies in Mathematics, vol.~33, American Mathematical Society,
  Providence, RI, 2001.

\bibitem{Cdimension}
M.~G. Charalambous, \emph{Dimension theory: A selection of theorems and
  counterexamples}, Atlantis Studies in Mathematics, vol.~7, Springer, Cham,
  2019.

\bibitem{herron2016gromov}
David~A Herron, \emph{{G}romov--{H}ausdorff distance for pointed metric
  spaces}, J. Anal. \textbf{24} (2016), no.~1, 1--38.

\bibitem{HW1948}
W.~Hurewicz and H.~Wallman, \emph{Dimension theory}, revised ed., Princeton
  Univ. Press, 1948.

\bibitem{Ishiki2020int}
Y.~Ishiki, \emph{An interpolation of metrics and spaces of metrics}, preprint,
  arXiv:2003.13277 (2020), pp.1--23.

\bibitem{Ishiki2021branching}
\bysame, \emph{Branching geodesics of the {G}romov--{H}ausdorff distance},
  preprint, arXiv:2108.06970 (2021), pp.1--21.

\bibitem{Ishiki2021fractal}
\bysame, \emph{Fractal dimensions and topological embeddings of simplexes into
  the {G}romov--{H}ausdorff space}, preprint, arXiv:2110.01881 (2021),
  pp.1--22.

\bibitem{Kelly1975}
J.~L. Kelley, \emph{General topology}, Springer-Verlag New York, 1975.

\bibitem{memoli2021characterization}
F.~M{\'e}moli and Z.~Wan, \emph{Characterization of {G}romov-type geodesics},
  preprint, arXiv:2105.05369 (2021), pp.1--58.

\bibitem{Nagata1983}
J.~Nagata, \emph{Modern dimension theory}, revised ed., Sigma Series in Pure
  Mathematics, vol.~2, Heldermann Verlag, Berlin, 1983.

\bibitem{P1975}
A.~R. Pears, \emph{Dimension theory of general spaces}, Cambridge Univ. Press,
  1975.

\bibitem{W1970}
S.~Willard, \emph{General topology}, Dover Publications, 2004; originally
  published by the Addison-Wesley Publishing Company in 1970.

\end{thebibliography}

\end{document}